\newtheorem{thm}{Theorem}[section]
\newtheorem{cor}[thm]{Corollary}
\newtheorem{lem}[thm]{Lemma}
\newtheorem{prop}[thm]{Proposition}
\newtheorem*{mainthm}{Main Theorem}
\theoremstyle{definition}
\newtheorem{defn}[thm]{Definition}
\newtheorem{rem}[thm]{Remark}
\newtheorem*{rem*}{Remark}
\numberwithin{equation}{section}
\definecolor{OrangeRed}{cmyk}{0,0.6,1,0}            % half magenta only, full yellow
\definecolor{DarkBlue}{cmyk}{1,1,0,0.20}
\definecolor{DarkGreen}{cmyk}{1,0,0.6,0.2}
\definecolor{myblue}{rgb}{0.66,0.78,1.00}
\definecolor{Violet}{cmyk}{0.79,0.88,0,0}
\definecolor{Lavender}{cmyk}{0,0.48,0,0}
\renewcommand{\Im}{\operatorname{Im}}
\renewcommand{\Re}{\operatorname{Re}}
\newcommand{\re}{\operatorname{Re}}
\newcommand{\Hol}{\operatorname{Hol}}
\newcommand{\C}{{\mathbb C}}
\newcommand{\D}{{\mathbb D}}
\newcommand{\N}{{\mathbb N}}
\renewcommand{\P}{{\mathbb P}}
\newcommand{\R}{{\mathbb R}}
\newcommand{\ra}{\rightarrow}
\renewcommand{\epsilon}{\varepsilon}
\renewcommand{\phi}{\varphi}
\renewcommand{\emptyset}{\varnothing}
\title[Non-bulging Baker domains for transcendental skew products]{Non-bulging Baker domains for transcendental skew products}
\author{Anna Miriam Benini}
\address{Dipartimento di Scienze Matematiche Fisiche e Informatiche, Universit\`a di Parma, Parma,  IT}
\email{ambenini@gmail.com}
\author{Tom Potthink}
\address{Tom Potthink, Univ. Bordeaux, CNRS, Bordeaux INP, IMB, UMR 5251, F-33400 Talence, France}
\email{tom.potthink@math.u-bordeaux.fr}
\author{Jasmin Raissy}
\address{Jasmin Raissy, Univ. Bordeaux, CNRS, Bordeaux INP, IMB, UMR 5251, F-33400 Talence, France \& Institut Universitaire de France (IUF)}
\email{jasmin.raissy@math.u-bordeaux.fr}
\thanks{This material is based upon work supported by the National Science Foundation under Grant No. 1440140, while the first and third  authors were in residence at the Mathematical Sciences Research Institute in Berkeley, California, during the spring semester of 2022.  Partially supported by the Institut Universitaire de France (IUF) and 
the French Italian University and Campus France through the PHC Galileo program, under the project ``From rational to transcendental: complex dynamics and parameter spaces''. The first author was also partially supported by the Indam group GNAMPA}
\subjclass[2020]{Primary 37F80; Secondary 30D05, 32H50, 37F10}
\keywords{Transcendental entire functions, skew products, Fatou set, Baker domains}
\begin{document}

\maketitle  
\begin{abstract}
In this paper we show that Baker domains of transcendental skew products can either bulge or not, depending on the higher order terms. This is in contrast to polynomial skew products where all Fatou components with bounded orbits of an invariant attracting fiber do bulge.
\end{abstract}

\section{Introduction}
A skew product is a holomorphic map $F:\C^2\ra\C^2$ of the form
\begin{equation}\label{eq:skew product}
F(z,w)=(f(z,w),g(w)).
\end{equation}
  Notice that $f:\C^2\to\C$ and $g:\C\to \C$, and that the dynamics in the second component is one-dimensional. 
  
  As innocent as they may appear at first sight, their dynamics is far from being trivial. For instance, they provided the first known example of wandering domains for polynomial endomorphisms of $\C^2$ \cite{ABDPR}. 
  
  It is natural to ask to what extent the one-dimensional dynamics in the second component influences the global two-dimensional dynamics. The simplest question is to investigate the relation between the set of normality for the one-dimensional map $g$ and the one for  the two-dimensional map $F$. 
 In general, it is difficult to pinpoint the relation between normality of iterates  of a map $F=(f_1,f_2):\C^2\ra\C^2$ and normality of iterates of its components $f_1, f_2$, in however way one wants to make sense of it. The definition of normality itself  for maps of $\C^2$ is not choice-free, since normality requires the target space to be compact. While it is commonly agreed  that the correct compactification of $\C$ is the Riemann sphere $\widehat{\C}$, the one point compactification of $\C^2$ is not a complex manifold, and there are several natural compactifications of $\C^2$ that are complex manifolds (see \cite{Morrow} and \cite{Brenton} for example). Here we will choose as compactification the projective space (see the discussion in \cite{ABFP2019} and Definition~\ref{def:normality}). 
  
  If we assume that $w_0\in\C$ is an attracting fixed point for $g(w)$, that is $g(w_0)=w_0$ and $|g'(w_0)|<1$, then the fiber $\{w=w_0\}$ is an invariant curve in $\C^2$ and  $f_{w_0}:=f(\cdot, w_0):\C\to\C$ is a one-dimensional map acting on $\{w=w_0\}$.  Hence, it has its own Fatou components coming from the one-dimensional theory. Since the dynamics of $F$ near $\{w=w_0\}$ is attracting in the $w$ direction, it is natural to expect that each Fatou component for $f_{w_0}$ is contained in a Fatou component for $F$. Without loss of generality, for the remainder of the paper we will assume that $w_0=0$. 
     
   \begin{defn}[Bulging]\label{def:bulging}
  A Fatou component $U$ for $f_{0}$ \emph{bulges} if there exists a Fatou component $\Omega$ for $F$ such that $U\subset\Omega\cap\{w=0\}$.
  \end{defn} 
  
  By Sullivan's No Wandering Domains Theorem \cite{Sullivan1985} it is known that all Fatou components of a one-dimensional polynomial are periodic or pre-periodic. Moreover, any invariant Fatou component of such polynomials is either the basin of an attracting fixed point, or a parabolic basin, or a Siegel disk. 
  Therefore the classification of Fatou components for one-dimensional polynomial maps  implies that, except for the basin of infinity, iterates are uniformly bounded, which with some work allows to conclude that each Fatou component with bounded orbits for $f_{0}$ \emph{bulges} to a Fatou component of $F$. 
  
   This fact is summarized in the following result obtained by Lilov \cite{Lilov}.

  \begin{thm}[Bulging for polynomial skew products, \cite{Lilov}]
  If $F$ is a polynomial skew product and $0$ is an attracting fixed point for $g$, then all Fatou components of $f_{0}$ with bounded orbits bulge. 
  \end{thm}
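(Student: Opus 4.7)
The plan is to reduce to the case of an invariant Fatou component of $f_0$ and then establish a uniform bound on $\{F^n\}$ in a $\C^2$-neighborhood of the fiber, case by case, using the appropriate one-dimensional dynamical structure.

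\emph{Reductions.} By Sullivan's theorem every Fatou component $U$ of $f_0$ is pre-periodic. Preimages under any iterate $F^k$ of the Fatou set of $F$ lie in the Fatou set of $F$ (normality of $\{F^n\}_{n \ge 0}$ at $F^k(p)$ transfers to normality at $p$, since $F^n = F^{n-k}\circ F^k$ for $n \ge k$), so bulging of a periodic component $V$ to $\Omega \subset \mathrm{Fatou}(F)$ propagates to a pre-periodic $U$ with $f_0^k(U) \subset V$: the connected set $U \times \{0\}$ is contained in $F^{-k}(\Omega)$ and hence in a single Fatou component of $F$. Replacing $F$ by an iterate $F^p$, still a skew product with $(g^p)'(0) = g'(0)^p$ in the open unit disk, reduces further to $U$ invariant under $f_0$. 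By the classical one-dimensional classification, such a $U$ with bounded orbits is an attracting basin of a fixed point $p_0 \in U$ with $|f_0'(p_0)|<1$, a parabolic basin attached to a fixed point $p_0 \in \partial U$ with unit-root multiplier, or a Siegel disk centred at a fixed point $p_0 \in U$ with $f_0'(p_0)=e^{2\pi i\alpha}$.

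\emph{Uniform bound.} Fix a compact $K \subset U$; I seek $\delta > 0$ with $\{F^n\}_{n \ge 0}$ uniformly bounded on $K \times \overline{D(0,\delta)}$. Montel will then give $K \times D(0,\delta) \subset \mathrm{Fatou}(F)$, and exhausting $U$ by such compacta, together with connectedness of $U \times \{0\}$, will place $U \times \{0\}$ in a single Fatou component of $F$. The second coordinate is automatically controlled by $|g^n(w)| \le \lambda^n |w|$ with $\lambda = |g'(0)| < 1$, so everything hinges on bounding $f_n(z,w)$ uniformly in $n$. In the attracting case this follows from a direct recursion: on a small ball $B\ni p_0$ the Lipschitz constant of $f(\cdot,w)$ in $z$ is less than $1$, and $f_0^n(K) \subset B$ for all $n$ large, so for $n$ large the error $|f_n(z,w) - f_0^n(z)|$ satisfies a contracting recursion and hence stays bounded by $C\delta$. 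In the parabolic and Siegel cases a naive Lipschitz estimate in $(z,w)$ gives $|f_n(z,w)-f_0^n(z)|\le CL^n|w|$ with $L>1$ in general and so is useless. The remedy is to conjugate by a fiber-preserving biholomorphism $\Phi(\zeta,w) = (\phi(\zeta),w)$, where $\phi$ is a Fatou coordinate on an attracting petal (parabolic) or the Siegel conjugacy $\phi\colon \D \to U$ (Siegel). In the conjugate skew product $\widetilde{F} := \Phi^{-1}\circ F \circ \Phi$ the first coordinate at $w=0$ is a translation or a rotation, hence has $\zeta$-Lipschitz constant exactly $1$, and the telescoping recursion
\[
|\widetilde{F}_1^{n+1}(\zeta,w) - \widetilde{F}_1^{n+1}(\zeta,0)| \le C|g^n(w)| + |\widetilde{F}_1^n(\zeta,w) - \widetilde{F}_1^n(\zeta,0)|
\]
bounds the error by $C|w|/(1-\lambda)$ uniformly in $n$. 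Transporting back by $\Phi$, and in the parabolic case first using finitely many $F$-iterates to funnel $K$ into a petal where the Fatou coordinate lives, gives the required uniform bound.

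\emph{Main obstacle.} The decisive difficulty is the failure of the naive Lipschitz estimate in the parabolic and Siegel cases: the unit-modulus multiplier $f_0'(p_0)$ prevents the fiber error from being contracted in $\C^2$. The fiber normal forms save the day by replacing $f_0$ with its translation or rotation model, in which the derivative becomes exactly $1$ in modulus and the error propagation collapses to a convergent geometric series in $\lambda = |g'(0)|$. The attracting case, by contrast, is straightforward since the strict contraction $|f_0'(p_0)|<1$ survives into a neighborhood and makes the recursion directly contracting; the Siegel case is the cleanest of the remaining two because the Siegel conjugacy is globally defined on $U$; the parabolic case is subtlest, requiring the preliminary step of pushing compacta of $U$ into a petal by forward iteration.
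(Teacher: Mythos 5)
The paper states this result without proof, citing Lilov's thesis \cite[Sect.~3.2.1]{Lilov}, so there is no in-paper argument to compare against; your proposal is essentially a reconstruction of Lilov's strategy (reduce to invariant components via Sullivan, classify by type, conjugate to a normal form, telescope the error against the geometric decay $|g^n(w)| \lesssim \lambda^n|w|$), and the overall structure is the right one.

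The place where your write-up is not quite correct as stated is the telescoping recursion in the parabolic case. You claim the conjugated map has $\zeta$-Lipschitz constant \emph{exactly} $1$ and a \emph{fixed} constant $C$ in front of $|g^n(w)|$. Neither is true. The $\zeta$-Lipschitz constant of $\widetilde{f}(\cdot,w)$ is only $1+O(|w|)$, which is harmless because $\prod_n\bigl(1+O(\lambda^n|w|)\bigr)$ is bounded. More seriously, the vertical error term is $\bigl|\phi\bigl(f(z_n,g^n(w))\bigr)-\phi\bigl(f_0(z_n)\bigr)\bigr|\le \sup|\phi'|\cdot L\,|g^n(w)|$, and the base point $z_n=\phi^{-1}(\zeta+n)$ tends to the parabolic point $p_0$, where $\phi'(z)\sim (z-p_0)^{-2}\sim |\zeta+n|^2$ blows up. So the correct recursion has a coefficient $C_n\sim n^2$, not a constant $C$: one gets $E_n\lesssim |w|\sum_k k^2\lambda^k$, which is still finite since the polynomial is beaten by the exponential, but the inequality you wrote down is wrong as an inequality. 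Relatedly, you must also check that the perturbed orbit remains in the petal for the conjugacy to make sense at all: the petal narrows like $\operatorname{dist}^2$ to $p_0$, i.e.\ like $n^{-2}$ at step $n$, while the accumulated perturbation shrinks like $\lambda^n$, so this again works but is not automatic. The Siegel case has an analogous subtlety: you need the telescoped drift $O(|w|)$ to be small enough that the orbit stays in a fixed compact of $\D$, since $\phi$ and $\phi^{-1}$ are only uniformly bi-Lipschitz away from $\partial\D$; your argument silently uses this. None of these gaps is fatal, but the key estimate should be stated with the $n$-dependent constant and accompanied by the verification that the orbit never leaves the domain of the conjugating coordinate.
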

   
   More precisely, Lilov proved in \cite[Sect. 3.2.1]{Lilov} the bulging of attracting basins of fixed points different from infinity,  of parabolic basins, and of Siegel disks for $f_{0}$ when $0$ is an attracting fixed point for $g$.
   %\footnote{Lilov  proved also that if $f_{0}$ has a Herman ring and  $0$ is an attracting fixed point for $g$, then the Herman ring bulges.} 
   One can also relax the hypothesis that $0$ is an attracting fixed point for $g$ and make it the center of a Siegel disk; in this case, the picture becomes more intricate \cite{PetersRaissy}. 
  
   In fact, more can be asked: Is $\Omega \cap \{w=0\}$ \emph{equal} to $U$? And can there be \emph{other} Fatou components for $F$ which do not intersect $\{w=0\}$, yet intersect every  neighbourhood thereof? Several answers when $f(z,w)$ is a polynomial can be found in the survey \cite{Raissy2017} and the references therein. 

 When $f_0$ is transcendental rather than polynomial, two new types of Fatou components appear: \emph{wandering domains} (components of the Fatou set which are not pre-periodic) and \emph{Baker domains} (periodic Fatou components on which the iterates converge to the essential singularity at infinity). One needs to resist the temptation to consider Baker domains as  an analogue of parabolic domains for which the fixed point has been moved to infinity (see the  classification of Baker domains  \cite{FagellaHenriksen}, and compare to  \cite{Cowen}). On wandering domains, the iterates of the map can converge to infinity, have both bounded and unbounded subsequences, or possibly be bounded (there are no known examples of this last case occurring, but it has not been ruled out either).
  
   For the types of Fatou components for $f_{0}$ which also occur in the polynomial case, everything works as expected and follows from the proofs of Lilov in \cite{Lilov}.
  
   \begin{thm}[{\cite[Sect. 3.2.1]{Lilov}}]
  Let $F$ be  a transcendental skew product with an invariant attracting fiber $\{w=0\}$, such that $f_{0}$ has an invariant Fatou component $U$ with bounded orbits.  Then $U$ bulges.
  \end{thm}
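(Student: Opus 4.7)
The plan is to adapt Lilov's argument from the polynomial case, observing that the only feature of $f$ used there is that, on the compact set swept out by the forward orbit of any compact $K \subset U$, the map $f$ is locally bounded in both variables --- a property that holds just as well in the transcendental case, provided this forward orbit is bounded. That is exactly the hypothesis on $U$.

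First I would put $g$ into a local normal form near $0$, so that (in the geometrically attracting case) $g(w) = \lambda w$ with $0 < |\lambda| < 1$, and write
\[
f(z,w) = f_{0}(z) + w\, h(z,w)
\]
with $h$ holomorphic on $\C \times \D_{r}$ for some $r>0$. For a relatively compact $K \subset U$, boundedness of orbits in $U$ gives a compact set $K' \subset \C$ with $f_{0}^{n}(K) \subset K'$ for all $n\geq 0$; by continuity of $h$, the quantity
\[
M := \sup_{z\in K',\ |w|\leq r} |h(z,w)|
\]
is finite, and by one-dimensional normality of $\{f_{0}^{n}\}$ on $U$ the iterates $f_{0}^{n}$ are uniformly Lipschitz on a fixed compact neighborhood of $K$ inside $U$.

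Next, for $(z_{0}, w_{0}) \in K \times \D_{\delta}$ with $\delta$ small, I would set $(z_{n},w_{n}) := F^{n}(z_{0},w_{0})$ and exploit $|w_{n}| \leq |\lambda|^{n}\delta$ together with the recursion $z_{n+1} - f_{0}(z_{n}) = w_{n}\, h(z_{n}, w_{n})$ in a telescoping-induction argument to show that $z_{n}$ stays inside $K'$ and remains uniformly close to $f_{0}^{n}(z_{0})$ for every $n$. Consequently the family $\{F^{n}\}$ is uniformly bounded on $K \times \D_{\delta} \subset \C^{2} \subset \P^{2}$ and is therefore normal there in the sense of Definition~\ref{def:normality}. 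Since $K\subset U$ was arbitrary, every point of $U \times \{0\}$ has a bidisk neighborhood contained in the Fatou set of $F$; by connectedness of $U$ all of these local Fatou neighborhoods lie in one and the same Fatou component $\Omega$, which therefore contains $U$.

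The main obstacle --- and the only place where the transcendental case genuinely differs from Lilov's --- is the potential unboundedness of $h(z,w)$ in the variable $z$: in the polynomial setting $h$ is a polynomial and hence automatically bounded on any compact set, whereas in the transcendental setting one must first use the compactness of the orbit closure of $K$ in $\C$ to localize the discussion. Once the analysis is confined to $K'\times \D_{\delta}$, this obstacle disappears and the polynomial-case proof of Lilov goes through essentially verbatim. The super-attracting case $\lambda = 0$, and the refinements needed when $U$ is a parabolic basin or a Siegel disk, require only minor notational adjustments to the local model for $w_{n}$ but no new ideas.
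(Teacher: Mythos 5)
The paper does not supply its own proof of this statement; it is stated purely as a citation to Lilov's thesis, where the three possibilities for an invariant bounded-orbit component --- attracting basin, parabolic basin, Siegel disk --- are handled separately, in each case exhibiting a small forward-invariant bulging region near the periodic point and then transporting it to all of $U$ via a \emph{fixed} iterate $F^N$ and continuity, in the same spirit as the proof of Proposition~\ref{lem:absorbing implies everything}. Your proposal is a genuinely different, unified shadowing argument, and it is appealing; but as written it has a gap in the parabolic case.

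The telescoping estimate you rely on amounts to
\begin{equation*}
|z_n - f_0^n(z_0)| \le \sum_{j=0}^{n-1} L\, M\, |\lambda|^j \delta \le \frac{L M \delta}{1-|\lambda|},
\end{equation*}
where $L$ is a uniform Lipschitz constant for \emph{all} iterates $f_0^k$ on a fixed neighbourhood of the forward orbit closure $K' := \overline{\bigcup_{n\ge 0} f_0^n(K)}$ (not merely of $K$, since the estimate is applied at points near $f_0^m(z_0)$ for every $m$). This is fine when $U$ is an attracting basin or a Siegel disk: in those cases $K'$ is a compact subset of $U$, so normality of $\{f_0^n\}$ together with the Cauchy estimates bounds $(f_0^k)'$ uniformly on a fixed $\epsilon_0$-neighbourhood of $K'$, and choosing $\delta$ small closes the induction. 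But when $U$ is a parabolic basin, $K'$ contains the parabolic fixed point $p \in \partial U$, and \emph{no} fixed neighbourhood of $K'$ lies in $U$: the petal has a cusp at $p$, points arbitrarily near $p$ lie in the Julia set, and the derivatives $(f_0^k)'$ are not uniformly bounded there. Moreover your bound $\frac{L M \delta}{1-|\lambda|}$ is independent of $n$, while $\operatorname{dist}(f_0^n(z_0),\partial U)$ decays to $0$ polynomially, so the induction ``$z_n$ stays in a fixed compact subset of $U$'' cannot close. Repairing this takes real work --- either pass to Fatou coordinates, where $f_0$ becomes a translation and the perturbation, rescaled by the coordinate derivative, is still summable, or exploit the Euclidean contraction of $(f_0^k)'$ along the attracting petal --- which is precisely why the parabolic case is treated on its own in Lilov. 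The superattracting refinement you mention, by contrast, really is harmless, since $|w_n|$ decays even faster than geometrically.
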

  
  In this paper we study the bulging of Baker domains, and we show that the fact that iterates tend to infinity makes it possible for Baker domains to either bulge or not depending on the choice of the perturbation. Our main result is the following theorem (see Section~\ref{sec:Baker} for the fact that $z\ra z+1+e^{-z}$ has a Baker domain, and Section~\ref{bulging} for the definition of order  in the first variable).
  
 \begin{mainthm}
Let $f_{0}(z)=z+1+e^{-z}$, let $B$ be its Baker domain, and let $F(z,w)$ be a skew product of the form 
$$
F(z,w)=(f_{0}(z)+w h (z,w),g(w)),
$$
 for which $g$ has an attracting fixed point at $0$. 
Then \begin{itemize}
\item If  the order  of $h$ in the first variable is less than $1$, then $B$ bulges. 
\item If  $g'(0)=0$, and  the order  of $h$ in the first variable is finite, then $B$ bulges. 
\end{itemize}
On the other hand, there exists a transcendental entire function $h(z)$ such that $B$ does not bulge.
  \end{mainthm}
  In particular, if  $ h(z,w)$ does not depend on $z$, then the order in the first variable is zero, and the  Baker domain $B$ bulges. The non-bulging case is first of its kind in the attracting setting. Moreover, the proof in the bulging case relies on the analysis and control of the growth given by the perturbation $w h(z,w)$ in the first coordinate, which is conceptually different from the strategy used by Lilov in \cite{Lilov}. 
    
Bulging of  wandering domains, the last type of Fatou components to consider,  will be treated in a forthcoming paper by the second and last authors \cite{TJ}. 

\smallskip
\subsection*{Structure of the paper} In section~\ref{preliminaries}, we give some preliminaries on the definition of normality in $\C^2$ and on Baker domains. In section~\ref{bulging}, we construct examples of bulging Baker domains. In section~\ref{non-bulging}, we construct the example of a non bulging Baker domain.

\subsection*{Notation} We denote by $\C$ the complex plane, by $\P^2$ the complex projective space of dimension 2, and by $\D(a,r)$ the Euclidean disk of radius $r$ centered at $a$. Given $F:\C^2\ra\C^2$ and a point $P=(z,w)$, we write its orbit as  $(z_n, w_n):=F^n(z,w)$.

\subsection*{Acknowledgments} This project grew up from discussions with Alberto Saracco on the dynamics of transcendental skew products and we thank him for his input which led to the present work.
The first and last author also thank the MSRI and NSF for their support during the program ``Complex Dynamics: from special families to natural generalizations in one and several variables'' which took place during the spring semester of 2022 and where part of this work was initiated. This project was supported in part with funding from the Institut Universitaire de France (IUF) and 
the French Italian University and Campus France through the PHC Galileo program, under the project ``From rational to transcendental: complex dynamics and parameter spaces''. The first author was also partially supported by the Indam group GNAMPA.

\section{Preliminaries: Normality, Baker domains, Absorbing domains}\label{preliminaries}

\subsection{Normality}

We  compactify $\C^2$ as $\P^2$ by adding a complex line at infinity, which entails  the following definition of normality \cite{ABFP2019}.

\begin{defn}\label{def:normality}
Let $X\subset \C^2$ be a domain. 
A family $\mathcal{F}\subset {\rm Hol}(X,\C^2)$ is  \emph{normal}  if for every sequence $(F_n)_{n\ge 0}\subset\mathcal{F}$ there exists a subsequence $(F_{n_k})_{n_k\ge 0}$ converging uniformly on compact subsets to $G\in {\rm Hol}(X,\P^2)$.
In other words, $\mathcal{F}$ is  relatively compact in ${\rm Hol}(X,\P^2)$.
\end{defn}

  We notice that for polynomial automorphisms, this definition of normality  is equivalent to the one induced by the one-point compactification of $\C^2$.  Indeed by \cite{FM89}, it is enough to look at H\'enon maps, and for the latter, the filtration in  \cite{BS91} implies that every escaping orbit converges to the point $[1:0:0] \in \P^2$.
  
  The following result shows that, for a given Fatou component, all limit functions are either holomorphic maps in $\Hol(X, \C^2)$, or take values entirely in the compactifying part at infinity.

\begin{lem}[{\cite[Lemma 2.4]{ABFP2019}}]\label{Lemma:NormalityRelationsBakerVariant}
	Let $X\subset \C^2$ be a domain. If a family $\mathcal{F} \subset \Hol(X, \C^2)$ is normal, then either any limit function lies in $\Hol(X, \C^2)$, or the corresponding sequence diverges locally uniformly to infinity.
\end{lem}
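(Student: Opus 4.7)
The plan is to leverage the connectedness of the domain $X$. Suppose $(F_{n_k})$ is a subsequence of $(F_n) \subset \mathcal{F}$ converging uniformly on compacts to some $G \in \Hol(X,\P^2)$, and set $L_\infty := \P^2 \setminus \C^2$ and $H := G^{-1}(L_\infty) \subset X$. The statement is equivalent to showing that $H = \emptyset$ or $H = X$. Since $L_\infty$ is closed in $\P^2$ and $G$ is continuous, $H$ is automatically closed in $X$; as $X$ is a domain and hence connected, it suffices to prove $H$ is also open.

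To prove openness at an arbitrary $x_0 \in H$, I would pass to an affine chart around $G(x_0)$. Writing $G(x_0) = [a:b:0]$ with $(a,b) \neq (0,0)$, I can assume without loss of generality that $a \neq 0$ (the case $b \neq 0$ is symmetric). Work in the affine chart $U_1 := \{[\alpha:\beta:\gamma] : \alpha \neq 0\}$ with local coordinates $(\beta/\alpha,\gamma/\alpha)$, so that $G(x_0)$ reads as $(b/a, 0)$. By continuity of $G$ and uniform convergence of $F_{n_k} \to G$ in the Fubini--Study metric, I can find a connected open neighborhood $B \subset X$ of $x_0$ on which $G(B) \subset U_1$ and, for all sufficiently large $k$, $F_{n_k}(B) \subset U_1$; this last condition is exactly the non-vanishing of $F^{(1)}_{n_k}$ on $B$. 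In the chart $U_1$ the sequence reads $F_{n_k}|_B = \bigl( F^{(2)}_{n_k}/F^{(1)}_{n_k},\,1/F^{(1)}_{n_k}\bigr)$, and the convergence in $\P^2$ translates to uniform convergence on compacta of $B$ of these chart coordinates to $(G_1, G_2)$, where $G_2(x_0) = 0$.

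The decisive step is Hurwitz's theorem in several complex variables: a locally uniform limit of nowhere-vanishing holomorphic functions on a connected open set is either nowhere vanishing or identically zero. The functions $1/F^{(1)}_{n_k}$ are nowhere zero on $B$ (since each $F^{(1)}_{n_k}$ takes finite values), and they converge uniformly on compacta of $B$ to $G_2$; since $G_2(x_0) = 0$, one concludes $G_2 \equiv 0$ on $B$. This says $G(B) \subset L_\infty$, hence $B \subset H$, so $H$ is open. Combined with its closedness and the connectedness of $X$, this forces $H = \emptyset$ or $H = X$, which are precisely the two alternatives in the statement (in the second case the uniform convergence $F_{n_k} \to G \subset L_\infty$ in the Fubini--Study metric is equivalent to local uniform divergence to infinity in $\C^2$).

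The main obstacle is the Hurwitz step. The several-variable version is not completely automatic, but reduces cleanly to the one-variable case: if $G_2 \not\equiv 0$ while $G_2(x_0) = 0$, then some complex line $\ell$ through $x_0$ in $B$ gives a restriction $G_2|_\ell$ that is a nonzero one-variable holomorphic function vanishing at $x_0$; classical Hurwitz applied to the restrictions of $1/F^{(1)}_{n_k}$ to $\ell$ would then force these restrictions to have zeros for $k$ large, contradicting the non-vanishing of $F^{(1)}_{n_k}$ on $B$. A small additional check is needed to confirm that convergence in the Fubini--Study metric on $G(\overline{B})$ indeed produces uniform convergence of the chart coordinate $1/F^{(1)}_{n_k}$ on compacta of $B$, but this follows from the smoothness of the chart change on the relevant compact subset of $U_1$.
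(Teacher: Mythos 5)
The paper states this lemma as a citation to \cite{ABFP2019} and does not reproduce a proof, so there is no in-text argument to compare against. Your proof is correct, and it uses exactly the standard ingredients one would expect to find in the reference: the open/closed dichotomy on $H = G^{-1}(L_\infty)$ combined with connectedness of $X$, the passage to an affine chart in which $L_\infty$ is cut out by the single coordinate $\gamma/\alpha$ so that the relevant quantity becomes the nowhere-vanishing holomorphic function $1/F^{(1)}_{n_k}$, and Hurwitz's theorem forcing the limit $G_2$ to vanish identically once it vanishes at $x_0$. Your reduction of the several-variable Hurwitz principle to the classical one-variable statement by restricting to a generic complex line through $x_0$ (on which $G_2$ is not identically zero, since the lowest-order homogeneous term of its power series is nonzero along generic directions) is sound, as is the remark that Fubini--Study convergence on a compact subset of a chart yields uniform convergence of the chart coordinates. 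The only tiny slip is at the very end of the Hurwitz paragraph, where the contradiction is with the non-vanishing of $1/F^{(1)}_{n_k}$ on $B$ rather than of $F^{(1)}_{n_k}$ itself; this is a wording issue, not a gap.
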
  
  
  \subsection{Baker domains and absorbing domains}\label{sec:Baker}
 Given a transcendental entire function $f:\C\ra\C$, a \emph{Baker domain} $B$ for $f$ is a periodic Fatou component on which orbits tend to infinity. The first example of Baker domains was given by Fatou \cite[Example I]{fatou19}, who proved that for the function $f(z)=z+1+e^{-z}$ orbits tend to infinity on the right half plane $\{z\in\C\mid\re z>0\}$.
 
One can show indirectly that Baker domains of entire functions are simply connected  \cite{BakerMCD}, hence biholomorphic to the unit disk $\D$ by the Riemann Mapping Theorem. 
 
 \begin{defn} An \emph{absorbing domain} for a Baker domain $B$ is an open, simply connected, forward invariant  set $D\subset B$ such that for every compact set $K\subset B$, there exists $n=n(K)\ge 0$ such that $f^{n}(K)\subset D$. 
 \end{defn}
 
It follows by results of Cowen that every Baker domain has absorbing domains (\cite{Cowen}; compare with  \cite{BFJK}).

We will use entire functions similar to Fatou's first example, namely of the form $f(z)=z+a+e^{-z}$ with $\re a>0$. These functions are sometimes called \emph{Fatou functions} (see for example \cite{Vasso}). The following fact is well known (the first property is elementary; for the second one, see e.g.  \cite[proof of Theorem 2.5]{KotusUrbanski05}). 

 \begin{lem}\label{lem:basic Baker 1d}
 Let  $f(z)=z+a+e^{-z}$ with $\re a>0$. Then  for any $R>|\log \re a|$,
 \begin{itemize}
 \item the right half plane $\{z\in\C\mid\re z>R\}$ is invariant under $f$ and contained in an invariant Baker domain $B_a$.
 \item the right half plane $\{z\in\C\mid\re z>R\}$ is an absorbing domain for $B_a$.
 \end{itemize}
 \end{lem}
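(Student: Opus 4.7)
The plan is to handle the two bullets separately. For the first, everything reduces to direct estimates: on $H_R := \{z\in\C\mid \re z > R\}$ one has $|e^{-z}| = e^{-\re z} < e^{-R}$, and the hypothesis $R > |\log \re a|$ is exactly what is needed to ensure $e^{-R} < \re a$ (trivially if $\re a \ge 1$, and because $R > -\log \re a$ if $\re a < 1$). Setting $\delta := \re a - e^{-R} > 0$, I would then show
\[
\re f(z) \ge \re z + \re a - |e^{-z}| > R + \delta,
\]
which gives both forward invariance of $H_R$ and, by induction, $\re f^n(z) \ge \re z + n\delta \to +\infty$ locally uniformly, placing $H_R$ in the Fatou set of $f$.

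Letting $B_a$ be the Fatou component containing $H_R$, invariance of $B_a$ follows because $f(B_a)$ is a connected subset of the Fatou set that meets $B_a$ via $f(H_R)\subset H_R$, hence lies in $B_a$ itself. Any subsequential limit $\phi\in\Hol(B_a,\widehat{\C})$ of $(f^n|_{B_a})$ equals $\infty$ on the open set $H_R$, and by the identity principle applied to $1/\phi$ is therefore identically $\infty$ on $B_a$; thus $f^n \to \infty$ locally uniformly on $B_a$ and $B_a$ is a Baker domain.

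For the second bullet, openness, simple connectivity, and forward invariance of $H_R$ are already in hand, so the only remaining task is to verify absorption: given a compact $K\subset B_a$, one must find $n$ with $f^n(K)\subset H_R$. My strategy combines two ingredients. First, by Cowen's theory of absorbing domains in Baker domains (\cite{Cowen}; see also \cite{BFJK} and the explicit analysis for Fatou functions in \cite{KotusUrbanski05}), $B_a$ already contains some absorbing set $V$ on which $f$ is conformally conjugate to a M\"obius translation $T$ of a half-plane. Second, the explicit form $f(z)-z = a+e^{-z}$ with $\re a>0$ identifies $T$ as a translation whose iterates push the conjugacy coordinate to $+\infty$, forcing $\re f^n(z)\to +\infty$ locally uniformly on $V$. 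Granting these two facts, any compact $K\subset B_a$ is absorbed by $V$ after finitely many iterates, and then by $H_R$ after a few more.

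The main obstacle is the second ingredient: the bare statement ``$f^n\to\infty$ on $B_a$'' only forces convergence in the spherical metric, and is a priori compatible with $\re f^n(z)$ staying bounded while $|\im f^n(z)|\to\infty$. Upgrading this to $\re f^n(z)\to+\infty$ for every $z\in B_a$ is what the Cowen semi-conjugacy accomplishes, by reading off the translation direction from the asymptotic behaviour $f(z)-z\to a$ on $H_R$; alternatively one can argue directly that if $\re f^n(z)$ were to stay bounded then $|e^{-f^n(z)}|$ would be bounded below, which is incompatible with the eventual translation-like behaviour dictated by the Cowen model on $V$. Once this qualitative escape in the direction of $H_R$ is secured, uniformity on compacts is a routine normal-family argument, together with upper semi-continuity of the first entry time into $H_R$.
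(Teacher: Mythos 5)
Your treatment of the first bullet is correct and elementary, and matches what the paper has in mind (the paper explicitly declares the first property elementary and gives no proof): the bound $|e^{-z}|<e^{-R}<\re a$ on the half-plane, the resulting drift $\re f^n(z)\ge \re z+n\delta$, invariance, and the identity-principle argument for $1/\phi$ that upgrades escape on $H_R$ to locally uniform escape on all of $B_a$, so $B_a$ is an invariant Baker domain.

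For the second bullet the paper gives no proof either; it simply refers to the proof of Theorem~2.5 in \cite{KotusUrbanski05}, and you cite the same sources, which is consistent. However, your attempted sketch of that proof has a gap at exactly the step you yourself flag as the crux. Cowen's theorem supplies an absorbing domain $V\subset B_a$ and a holomorphic semi-conjugacy $\Phi$ onto a M\"obius model, but nothing in that abstract statement forces escape in the model to correspond to escape of $\re$ in $B_a$: the map $\Phi$ is merely some holomorphic function, and without quantitative control it could a priori send points of $V$ with bounded real part but large imaginary part to points escaping in the model. The phrase ``reading off the translation direction from $f(z)-z\to a$ on $H_R$'' names the intuition but is not an argument; making it rigorous requires showing that $\Phi$ is asymptotically affine on the half-plane (which is exactly what \cite{KotusUrbanski05} does for Fatou functions). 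Your ``alternative'' direct argument (``$|e^{-f^n(z)}|$ bounded below is incompatible with the translation-like behaviour dictated by the Cowen model'') has the same circularity: it presupposes control of the Cowen model on $V$ that has not been established. In short, either one cites \cite{KotusUrbanski05} outright, as the paper does, or one must supply the estimate on the Cowen map on $H_R$; the proposal does neither, so the second bullet is not actually proved.
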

%  \begin{proof}.
%  \begin{itemize}
%  \item It is enough to show that if $\re z>t >|\log\re a|$ then $\re f(z)>\re z+\delta$ for some $\delta>0$ independent of $z$ (which will imply that the iterates tend to infinity uniformly hence are normal). This is true because
%$$
%\re f(z)=\re z+\re a+\re e^{-z}\geq \re z+\re a -e^{-t}
%$$  
%  and
%  $\delta:=\re a -e^{-t}>0$ since $t>|\log \re a|$. 
%    
%  \item {\violet Jasmin} Uniformize $U_a$ to the unit disk $\D$ via a conformal map $\phi: U_a\ra\D$ in such a way that the access to infinity given by the right half plane $\{\re z>t\}$ is mapped to 1. In this way 1 is the Denjoy-Wolff point for the map $\phi\circ f\circ \phi^{-1}$, hence the orbit of every point in $\D$ is eventually mapped to a horosphere tangent at 1.
%Here we need to argue that the preimages of horospheres in the Baker domain are approximately half planes? Maybe we can say that this is true if $t>>  |\log\re a|$
%  but still needs some hyperbolic geometry estimates?
%  \end{itemize}
%  \end{proof}

We will also need the following simple lemma for Fatou's function $f(z)=z+1+e^{-z}$. We report here the proof for the sake of completeness.

\begin{lem}\label{LEMMA:OneDim}
    Consider the function $f(z) = z + 1 + e^{-z}$. Then for any $\delta>0$ there exists a real number $x_0 > \log(2)$ such that for all integers $k\ge 1$
        \begin{equation}\label{LEMMA:OneDim:EQ1}
            x_k := f^{k}(x_0) \in (x_0 + k - \delta, x_0 + k + \delta) \subset \R,
        \end{equation}
        and
        \begin{equation}\label{LEMMA:OneDim:EQ2}
            |f(z) - f(\tilde{z})| < 10\delta
        \end{equation}
        for all $z,\tilde{z}\in \overline{\D(x_0 + k, 4\delta)}$ and $k\in\N$.
\end{lem}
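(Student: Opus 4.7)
The strategy is to pick $x_0$ large enough so that $e^{-x_0}$ is negligible compared with $\delta$. Once $x_0$ is large, on the half-plane $\{\re z \geq x_0\}$ the map $f$ is very close to the translation $z \mapsto z+1$, because $e^{-z}$ decays exponentially. Both conclusions should then follow: the orbit of $x_0$ drifts rightward at speed essentially $1$, and the derivative $f'(z) = 1-e^{-z}$ is close to $1$ on any disk whose center lies in that half-plane.

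For part \eqref{LEMMA:OneDim:EQ1}, I would argue by induction that $x_k \geq x_0 + k$, which is immediate from the recursion $x_{k+1} = x_k + 1 + e^{-x_k}$ since $x_0 > \log 2 > 0$ keeps all iterates real and increasing. This already supplies the lower bound in \eqref{LEMMA:OneDim:EQ1}. For the upper bound, telescope the recursion to get
\begin{equation*}
x_k = x_0 + k + \sum_{j=0}^{k-1} e^{-x_j} \leq x_0 + k + \sum_{j=0}^{k-1} e^{-x_0-j} \leq x_0 + k + \frac{e^{-x_0}}{1-e^{-1}}.
\end{equation*}
Choosing $x_0$ large enough that $e^{-x_0}/(1-e^{-1}) < \delta$ gives $x_k < x_0 + k + \delta$, as required.

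For part \eqref{LEMMA:OneDim:EQ2}, I would exploit the fact that $\overline{\D(x_0+k, 4\delta)}$ is convex and estimate $f'$ uniformly on it. For any $w$ in this disk, $\re w \geq x_0 + k - 4\delta \geq x_0 - 4\delta$, so
\begin{equation*}
|f'(w)| = |1 - e^{-w}| \leq 1 + e^{-\re w} \leq 1 + e^{-x_0 + 4\delta}.
\end{equation*}
Since the disk has diameter $8\delta$ and is convex, the mean-value inequality applied to the segment from $\tilde z$ to $z$ gives
\begin{equation*}
|f(z) - f(\tilde z)| \leq 8\delta\bigl(1 + e^{-x_0+4\delta}\bigr).
\end{equation*}
Requiring $e^{-x_0+4\delta} < 1/4$, equivalently $x_0 > \log 4 + 4\delta$, yields the desired bound $|f(z)-f(\tilde z)| < 10\delta$.

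Finally, I would take $x_0$ large enough to satisfy simultaneously the three constraints $x_0 > \log 2$, $e^{-x_0}/(1-e^{-1}) < \delta$, and $x_0 > \log 4 + 4\delta$; all three are compatible and produce an explicit admissible value of $x_0$. There is no real obstacle here, since the lemma is essentially a quantitative restatement of ``$f$ is a near-translation in the right half-plane''; the only care needed is to make the asymmetric interval in \eqref{LEMMA:OneDim:EQ1} match up correctly (the lower bound is automatic and the upper bound is the content) and to remember that the disks in \eqref{LEMMA:OneDim:EQ2} are complex, so the estimate must be phrased through the complex derivative along a segment rather than through monotonicity on $\R$.
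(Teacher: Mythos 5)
Your proposal is correct and follows essentially the same route as the paper: fix $x_0$ large so the exponential tail is negligible, telescope the recursion $x_{k+1}=x_k+1+e^{-x_k}$ to control the drift, and exploit the exponential decay in the right half-plane for the second estimate. For part~\eqref{LEMMA:OneDim:EQ1} you are in fact slightly sharper than the paper's proof (you bound $\sum e^{-x_j}\leq e^{-x_0}\sum e^{-j}$ using $x_j\geq x_0+j$, while the paper allows a lossier $\sum e^{-x_0-j/2}$; both converge, so this is a cosmetic difference). For part~\eqref{LEMMA:OneDim:EQ2} you differ superficially from the paper: the paper estimates $|f(z)-f(\tilde z)|\leq|z-\tilde z|+|e^{-z}|+|e^{-\tilde z}|\leq 8\delta+2e^{-x_0+4\delta}$ directly, while you bound $|f'|\leq 1+e^{-x_0+4\delta}$ on the convex disk and apply the mean-value inequality along a segment. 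Both hinge on the same fact, that $e^{-z}$ is uniformly tiny on $\{\re z\geq x_0-4\delta\}$, and both produce the requisite $<10\delta$ once $x_0$ is large enough; the two constraints you impose ($e^{-x_0}/(1-e^{-1})<\delta$ and $e^{-x_0+4\delta}<1/4$) together with $x_0>\log 2$ are indeed simultaneously satisfiable, so no gap.
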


\begin{proof}
%	We know that for all $z \in \C$
%	\begin{equation*}
%		\Re({z + 1 - f(z)}) = \Re(-e^{-z}) \leq |-e^{-z}|= |e^{-z}| = e^{-\Re(z)}
%	\end{equation*}
%    holds. In particular, $ \Re(f(z)) > \Re(z) + \frac{1}{2}$ for $\Re(z) > \log 2$, which yields 
%    \begin{equation}\label{PROOF:LEMMA:OneDim:EQ1}
%        \Re(f^k(z)) > \Re(z) + \frac{k}{2}
%    \end{equation} 
%    and thus
%    \begin{equation*}
%        |f^k(z)| \geq \Re(f^k(z)) \geq  \Re(z) + \frac{k}{2} \geq \log 2 + \frac{k}{2} \to \infty
%    \end{equation*}    
%    as $k \to \infty$. 
%    Since the lower bound does not depend on $z$, we obtain uniform convergence on $\{z \in \C\mid \Re(z) > \log(2)\}$. 
%    As our function is transcendental, this has to yield a Baker domain. 
%    This completes the proof of the first claim.

    Let $\delta > 0$ be arbitrary. 
    Since the series $\sum_{j=0}^{\infty} e^{-\frac{j}{2}}$ converges, we may choose $x_0 > \log 2 $ large enough such that
    \begin{equation}\label{PROOF:LEMMA:OneDim:EQ2}
        e^{-x_0}\sum_{j=0}^{\infty} e^{-\frac{j}{2}} = \sum_{j=0}^{\infty} e^{-x_0 -\frac{j}{2}} < \delta.
    \end{equation}
    and
    \begin{equation}\label{PROOF:LEMMA:OneDim:EQ3}
        e^{-x_0 + 4 \delta} < \delta.
    \end{equation}
    Therefore, by induction, we have that
    \begin{equation*}
        |x_0 + k - f^{k}(x_0)| \leq \sum_{j=0}^{k-1} e^{-x_0 -\frac{j}{2}}
    \end{equation*}
    for all integers $k\ge 1$, where empty sums are interpreted as $0$. Using \eqref{PROOF:LEMMA:OneDim:EQ2} and the fact that $f(x) \in \R$ for all $x \in \R$, we obtain \eqref{LEMMA:OneDim:EQ1}.

%    The case $k = 1$ is evident. Assuming that the induction hypothesis holds for some $k\in\N$, $k\ge 1$ we deduce that
%    \begin{align*}
%        |x_0 + k + 1 - f^{k+1}(x_0)| &= |x_0 + k + 1 - f(f^{k}(x_0))| \\
%        %
%        &= |{x_0 + k + 1 - f^{k}(x_0) - 1 - e^{- f^{k}(x_0)}}| \\
%        %
%        &\leq |{x_0 + k - f^{k}(x_0)}| + |{e^{- f^{k}(x_0)}}| \\
%        %
%        &\leq |{x_0 + k - f^{k}(x_0)}| + e^{- \Re(f^{k}(x_0))} \\
%        %
%        &\leq \sum_{j=0}^{k-1} e^{-x_0 -\frac{j}{2}} + e^{-x_0 - \frac{k}{2}} \\
%        %
%        &= \sum_{j=0}^{k} e^{-x_0 -\frac{j}{2}}
%    \end{align*}
%    where we used \eqref{PROOF:LEMMA:OneDim:EQ1}. This completes the induction.

    Finally, for property \eqref{LEMMA:OneDim:EQ2}, let $z,\tilde{z}\in \overline{\D(x_0 + k, 4\delta)}$ and $k\in\N$. Then, using \eqref{PROOF:LEMMA:OneDim:EQ3}, we obtain
    \begin{equation*}
        \begin{split}
            |{f(z) - f(\tilde{z})}| &\leq |{z - \tilde{z}}| + |{e^{-z}}| + |{e^{-\tilde{z}}}| \\
            &\leq 8\delta + e^{-\Re(z)} + e^{-\Re(\tilde{z})} \\
            &\leq 8\delta + 2 e^{-x_0 - k + 4 \delta} %\\
            %
            %&
            \leq 8\delta + 2 e^{-x_0 + 4 \delta} %\\
            %
            %&< 8\delta + 2 \delta \\
            %
            %&
            \leq 10\delta,
        \end{split}
    \end{equation*}
    which completes the proof.
\end{proof}
  
\section{Bulging Baker domains}\label{bulging}

In this section, we give conditions under which the Baker domain of a Fatou's function bulges. The following result shows that if an absorbing  domain bulges, so does the whole Baker domain.

\begin{prop}\label{lem:absorbing implies everything} 
Let $g\colon\C\to\C$ be an entire function with an attracting fixed point  at $0$. Let $F \colon \mathbb{C}^2 \to \mathbb{C}^2$ be a transcendental skew product of the form $F(z,w)=(f(z,w),g(w))$. Assume that $f_0$ has a Baker domain $B$ and let  $D$ be  an absorbing domain for $B$. If there is a Fatou component $\Omega$ for $F$ such that $D\subset\Omega\cap \{w=0\}$, then $B$ bulges to $\Omega$.
\end{prop}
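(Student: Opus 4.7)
The plan is to transfer normality from the absorbing domain $D$ to the whole Baker domain $B$, and then identify the relevant Fatou component of $F$ by connectedness. The attracting hypothesis on $g$ plays no role in the argument itself; all that matters is that $F$ is holomorphic on $\C^2$ and that $D\subset\Omega\cap\{w=0\}$.

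First I would fix an arbitrary $z_0 \in B$ and choose a relatively compact open neighborhood $U \Subset B$ of $z_0$. Since $D$ is absorbing for $B$, there exists $N = N(\overline U) \ge 0$ with $f_0^N(\overline U) \subset D$, so that $f_0^N(\overline U)\times\{0\}$ is a compact subset of the open set $\Omega$. Continuity of the holomorphic map $F^N$ then furnishes an open neighborhood of $\overline U\times\{0\}$ in $\C^2$---concretely one of product form $V = U' \times \D(0,\delta)$ with $U\Subset U'\subset B$---such that $F^N(V)\subset\Omega$.

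Next I would upgrade this inclusion to normality of $\{F^n\}_{n\ge 0}$ on $V$. Given any subsequence $(F^{n_k})$, I may assume $n_k\ge N$ and write $F^{n_k}|_V = F^{n_k-N}\circ F^N|_V$. Since $\Omega$ lies in the Fatou set, $\{F^{n_k-N}\}_k$ admits a subsubsequence converging uniformly on compact subsets of $\Omega$ to some $G\in\Hol(\Omega,\P^2)$. Because $F^N$ sends compact subsets of $V$ to compact subsets of $\Omega$, this pulls back to uniform convergence on compact subsets of $V$ (with limit $G\circ F^N$, taking values in $\P^2$ per Definition~\ref{def:normality}). Hence $V$ lies in the Fatou set of $F$, and in particular $(z_0,0)$ is a Fatou point of $F$.

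Since $z_0\in B$ was arbitrary, the image $B\times\{0\}$---connected because $B$ is---lies inside a single Fatou component of $F$. That component contains $D\times\{0\}\subset\Omega$, so it equals $\Omega$, giving $B\times\{0\}\subset\Omega$ as required. The argument is conceptually routine: the only step worth stating explicitly is the transfer of normality through continuous precomposition with $F^N$, and I do not expect any substantial obstacle.
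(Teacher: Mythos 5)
Your argument is correct and follows essentially the same outline as the paper's: use the absorbing property to find $N$ with $f_0^N(\overline U)\subset D$, use continuity of $F^N$ to thicken $\overline U\times\{0\}$ to a two-dimensional neighbourhood that $F^N$ maps into $\Omega$, and then conclude by connectedness of $B\times\{0\}$. The paper achieves the thickening with explicit quantitative control: it chooses a $\delta$-neighbourhood $Q$ of $f_0^N(K)$ inside $D$, estimates the perturbation $q(z,w)=\pi_z F^N(z,w)-f_0^N(z)$ by $\delta/2$, and uses $|g(w)|\le|w|$ near $0$ to control the second coordinate, whereas you invoke the tube lemma directly on $(F^N)^{-1}(\Omega)$, which is cleaner and bypasses the coordinate-wise bookkeeping. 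You also make explicit the normality-transfer step ($F^N(V)\subset\Omega$ implies $V$ lies in the Fatou set, via precomposition of locally uniform limits with the continuous map $F^N$), which the paper leaves implicit when it asserts that $K\times\overline{\D(0,\epsilon_K)}$ lies in a Fatou component. One small caution on your opening remark: while you are right that $|g'(0)|<1$ is never used, the hypothesis $g(0)=0$ is still essential — it is what makes $\{w=0\}$ invariant and $f_0$ a self-map, so that $B$ is even defined as a Baker domain; only the contraction rate is irrelevant here.
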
 

\begin{proof} Let $0 < \delta_g < 1$ be such that the disk $\D(0,\delta_g)$ lies entirely within the immediate attracting basin of $g$ at $0$, i.e., $|g(w)|\le|w|$ for all $x\in \D(0,\delta_g)$ and so $g(\D(0,\delta_g)) \subset \D(0,\delta_g)$. Moreover, up to considering an iterate, we may assume that the Baker domain $B$ is invariant.

Let $\Omega$ be the invariant Fatou component for $F$ containing the absorbing domain $D$, that is $D\subset\Omega$. It is enough to show that for every non-empty compact set $K\subset B$ we have $K\subset\Omega$.  
Up to enlarging $K$, since $B$ is connected we can assume that $K\cap D\neq\emptyset$. 
Since $D$ is an absorbing domain for $f_0$, there exists $N=N( K)\ge 0$ such that $f_0^N(K)\subset D$. Let $Q$ be a closed $\delta$-neighbourhood of $f_0^N(K)$ where $\delta>0$ is chosen small enough so that $Q\Subset D$.

 Since  $Q$ is compact and $D\subset\Omega\cap\{w=0\}$, there exists $0<\epsilon_Q<\delta_g$ such that $ Q\times\ D(0,\epsilon_Q)\Subset \Omega$.
By continuity of $F^N$, there is  $0<\epsilon_K<\epsilon_Q$ such that on $K\times \overline{\D(0, \epsilon_K)}$ we have $\|F(z,w)-F(z,0)\|<\delta/2$ .
In other words we can write 
\begin{equation}\label{eq:2.3}
F^N(z,w)=(z_N, w_N)=(f_0^N(z)+ q(z,w), g^N (w),)
\end{equation}
where $q(z,w)$ is a holomorphic function such that $|q(z,w)|<\delta/2$  on $K\times \overline{\D(0, \epsilon_K)}$.

We now claim that 
$$
F^N(K\times\overline{\D(0, \epsilon_K)})\subset Q\times\D(0, \epsilon_Q)\subset \Omega.
$$
Indeed, if $P=(z,w)\in K\times\overline{\D(0, \epsilon_K)}$, set $(z_N, w_N)= F^N(P)$. Since $|w|\le\epsilon_K<\epsilon_Q<\delta_g$ and $|g(w)|\le|w|$ on $\D(0,\delta_g)$, we also have that  $|w_N|\le|w|<\epsilon_Q$.  
For the first coordinate,  $|z_N-f_0^N(z)|=|q(z,w)|<\delta/2$, hence, since $f_0^N(z)\in f_0^N(K) $ and $Q$ is a $\delta$-neighbourhood of $f_0^N( K)$, we obtain $z_N\in Q$.

This implies that  $K\times \overline{\D(0, \epsilon_K)}$ is contained in a Fatou component $\Omega_K$ of $F$ (a priori, this could be a preimage of $\Omega$). However since $K\cap D\neq\emptyset$, $D\subset\Omega$, and Fatou components are connected, we have that $\Omega_K=\Omega$.

\end{proof}

We observe the following properties of bulging Baker domains. 

\begin{prop}\label{prop: properties Baker}
Let $g\colon\C\to\C$ be an entire function with an attracting fixed point  at $0$. Let $F \colon \mathbb{C}^2 \to \mathbb{C}^2$ be a transcendental skew product of the form $F(z,w)=(f(z,w),g(w))$. Assume that $f_0$ has a Baker domain $B$. If $B$ bulges to a Fatou component $\Omega$,  then  $F^n\ra[1:0:0]$ as $n\ra\infty$ on $\Omega$.  Moreover, if $B$ coincides with the Fatou set of $f_0$,  then  $\Omega\cap \{w=0\}=B$.
\end{prop}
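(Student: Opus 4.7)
The plan is to derive both statements from the normal family dichotomy in Lemma~\ref{Lemma:NormalityRelationsBakerVariant} together with the one-dimensional normality of the restricted iterates.

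For the first assertion, since $\Omega$ is a Fatou component the family $\{F^n\}$ is normal on $\Omega$. Given any subsequence, extract a sub-subsequence $F^{n_k}$ converging uniformly on compacts to some $G\colon\Omega\to\P^2$. If $G$ lay in $\Hol(\Omega,\C^2)$, its restriction to $B\subset\Omega\cap\{w=0\}$ would be the locally uniform limit of $(f_0^{n_k}(z),0)$; but on a Baker domain the first coordinate diverges to infinity, so no such $\C^2$-valued holomorphic limit can exist. By Lemma~\ref{Lemma:NormalityRelationsBakerVariant}, every subsequence of $\{F^n\}$ therefore diverges locally uniformly to infinity in $\C^2$, so $\{F^n\}$ itself does. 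To identify the $\P^2$-limit, write $F^n(z,w)=(z_n,w_n)$ with $w_n=g^n(w)$. The projection $\pi_w(\Omega)\subset\C$ is open and connected, contains $0$ (because $B\subset\Omega$ sits over $0$), and supports the normal family $\{g^n\}$; hence it is contained in the immediate basin of $0$ for $g$, so $w_n\to 0$ uniformly on compact subsets of $\pi_w(\Omega)$. Combined with $\|F^n\|\to\infty$ uniformly on compacts of $\Omega$, this forces $|z_n|\to\infty$ uniformly on compacts, whence in projective coordinates $[z_n:w_n:1]=[1:w_n/z_n:1/z_n]\to[1:0:0]$ uniformly on compacts of $\Omega$.

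For the second assertion, assume $B$ is the entire Fatou set of $f_0$. The open subset $\Omega\cap\{w=0\}$ of $\{w=0\}\cong\C$ carries the restricted iterates $F^n(z,0)=(f_0^n(z),0)$, and these form a normal family (in the one-variable sense on $\widehat{\C}$) inherited from the normality of $\{F^n\}$ on $\Omega$. Hence $\Omega\cap\{w=0\}$ is contained in the Fatou set of $f_0$, which by assumption is $B$; combined with the bulging inclusion $B\subset\Omega\cap\{w=0\}$ this yields equality.

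The main point requiring care is the passage from $\C^2$-divergence to $\P^2$-convergence in the first assertion: Lemma~\ref{Lemma:NormalityRelationsBakerVariant} only rules out holomorphic $\C^2$-valued limits, and one must independently use the attracting dynamics of $g$ on $\pi_w(\Omega)$ to pin down the limit in $\P^2$ to the single point $[1:0:0]$ rather than an arbitrary subset of the line at infinity.
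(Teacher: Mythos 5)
Your second assertion is correct and matches the paper's argument in spirit: normality of $\{F^n\}$ on $\Omega$ restricted to the slice $\{w=0\}$ does give one-variable normality of $\{f_0^n\}$ on $\Omega\cap\{w=0\}$, which together with $B\subset\Omega\cap\{w=0\}$ and the hypothesis that $B$ exhausts the Fatou set of $f_0$ yields the equality. The first assertion, however, contains a gap. After using Lemma~\ref{Lemma:NormalityRelationsBakerVariant} to conclude that $F^n$ diverges locally uniformly to infinity in $\C^2$, you assert that $\pi_w(\Omega)$ ``supports the normal family $\{g^n\}$'' and is therefore contained in the immediate basin of $0$ for $g$. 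This does not follow from the normality of $\{F^n\}$. Normality of $\{F^n\}$ on $\Omega$ with target $\P^2$ need not imply normality of $\{g^n\}$ on $\pi_w(\Omega)$ with target $\widehat{\C}$: if a subsequence $F^{n_k}=(z_{n_k},w_{n_k})$ converges to $[1:0:0]$, the only information this gives about the second coordinate is $w_{n_k}=o(|z_{n_k}|)$, which is compatible with $g^{n_k}(w)$ having both bounded and unbounded subsequences. The $\P^2$-compactification can hide the behaviour of the slower coordinate entirely. So the statement that $\pi_w(\Omega)$ lies in the Fatou set of $g$, let alone inside the immediate basin of $0$, is precisely what would need to be proved; you cannot simply invoke it to conclude $w_n\to0$ uniformly on compacts of $\Omega$.

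The paper's proof avoids this by arguing locally and then globalizing with the identity principle. Fix $P\in B\times\{0\}$ and a polydisk $\Delta\Subset\Omega$ around $P$ that is small enough in the $w$-direction that $\pi_w(\Delta)$ lies in a disk about $0$ contained in the attracting basin of $g$; there $w_n\to0$ trivially, and together with $\|F^n\|\to\infty$ this forces $|z_n|\to\infty$ and $F^n\to[1:0:0]$ on $\Delta$. Since any subsequential limit $G$ of $F^n$ on $\Omega$ is a holomorphic map into the line at infinity $\ell_\infty\cong\P^1$ (by Lemma~\ref{Lemma:NormalityRelationsBakerVariant}), and $G\equiv[1:0:0]$ on the open set $\Delta$, the identity principle gives $G\equiv[1:0:0]$ on all of $\Omega$. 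This identity-principle step is exactly the ingredient that replaces the unjustified global control of $w_n$ in your argument. If you restrict your claim about $\pi_w$ to a sufficiently small polydisk around a point of $B\times\{0\}$ and then add the identity-principle step, your proof becomes correct and essentially coincides with the paper's.
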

\begin{proof}
 We first show that $F^n\ra[1:0:0]$ as $n\ra\infty$ on $\Omega$.  Let $P\in B\times \{0\}$. Clearly  $F^n(P)\ra[1:0:0]$.  Hence by Lemma 2.4 in \cite{ABFP2019}, for any   limit function $h$ on $\Omega$, we obtain that $h(\Omega)\subset\ell_\infty$.  Let $\Delta$ be a small polydisk centered at  $P$ and $Q=(z^Q,w^Q)\in \Delta$. Then $w^Q_n\ra0$ because   $0$ is attracting for $g$, yet $\|F^n(Q)\|\ra\infty$ because for any   limit function $h$ on $\Omega$, we have that $h(\Omega)\subset\ell_\infty$. So $|z^Q_n|\ra\infty$ for any $Q\in \Delta$, hence  $F^n\ra[1:0:0]$ on $\Delta $ as $n\ra\infty$. By the identity principle, we conclude that $F^n\ra[1:0:0]$ on $\Omega$ as $n\ra\infty$.
 
We now  show that if $P\in\Omega\cap \{w=0\}$, then $P$ belongs to the Fatou set of $f_0$. If $f_0$ has no other Fatou component,  this implies that $\Omega\cap\{w=0\}=B$. 

If $P=(z,0)\in\Omega$, then there exists an open neighbourhood $U\subset\Omega$ of $P$ such that  $F^n\ra[1:0:0]$ on $U$. In particular, $\pi_z\circ F^n\ra\infty$ on $U$, where  $\pi_z$ is the projection on the first coordinate. Thus the iterates of $f_0$ are  normal on $U\cap\{w=0\}$, concluding the proof.
\end{proof}

As a corollary of Propositions~\ref{lem:absorbing implies everything} and \ref{prop: properties Baker},  it is easy to construct examples of bulging Baker domains.

\begin{cor}\label{cor:bulging Baker domain}
Let $a \in \mathbb{C} $ with $\Re(a) > 0$, let $f$ be the entire function $f(z) = z + a + e^{-z}$, and let $g$ be an entire function with an attracting fixed point at $0$.  Let $h$ be a non-constant entire function such that $h(0)=0$, and consider  the skew product
\begin{equation*}
		 F(z,w) = (f(z) + h(w), g(w)).
	\end{equation*}
	Then  the Baker domain  $B$ of $f$ bulges to a Fatou component $\Omega$ for $F$ and moreover $\Omega\cap\{w=0\}=B$.
\end{cor}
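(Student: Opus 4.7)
The plan is to reduce the statement to Proposition~\ref{lem:absorbing implies everything} by exhibiting a Fatou component $\Omega$ for $F$ containing the right half-plane $D_R = \{z:\Re z > R\}$ at level $\{w=0\}$, where $R > |\log \Re a|$; by Lemma~\ref{lem:basic Baker 1d} such a $D_R$ is an absorbing domain for $B$. Since $h(0)=0$, we have $f_0 = f$, and the first coordinate of $F^n$ starting from $(z_0,w_0)$ satisfies $z_{n+1} = f(z_n) + h(w_n)$ where $w_{n+1} = g(w_n)$. So the game is to show that a small bidisk neighbourhood $D_R \times \D(0,\epsilon)$ is swept by $F^n$ into a region where iterates converge to $[1:0:0]\in \P^2$.

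Concretely, I would choose $\epsilon > 0$ small enough that on $\overline{\D(0,\epsilon)}$ one has $|g(w)| \leq \lambda|w|$ for some $\lambda \in (0,1)$ (possible since $0$ is an attracting fixed point of $g$) and $|h(w)| \leq M|w|$ (possible since $h(0) = 0$). Then for $(z_0,w_0) \in D_R \times \overline{\D(0,\epsilon)}$, the orbit of $w_0$ satisfies $|w_n| \leq \lambda^n \epsilon$, so the total perturbation
$$\sum_{n=0}^{\infty} |h(w_n)| \leq \frac{M\epsilon}{1-\lambda}$$
can be made as small as desired by shrinking $\epsilon$, say smaller than $\tfrac{1}{2}(\Re a - e^{-R}) > 0$. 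Using
$$\Re z_{n+1} - \Re z_n = \Re a + \Re e^{-z_n} + \Re h(w_n) \geq \Re a - e^{-\Re z_n} - |h(w_n)|,$$
a straightforward induction shows that $\Re z_n$ remains above $R$ for all $n$ and grows at least linearly; hence $F^n$ converges locally uniformly on $D_R \times \D(0,\epsilon)$ to $[1:0:0]$, placing this set inside a single Fatou component $\Omega$. Proposition~\ref{lem:absorbing implies everything} then promotes this to the bulging of all of $B$.

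For the equality $\Omega \cap \{w=0\} = B$, I would invoke the second part of Proposition~\ref{prop: properties Baker}, which applies as soon as $B$ coincides with the full Fatou set of $f$. This is a known property of Fatou-type functions $f(z) = z + a + e^{-z}$ with $\Re a > 0$: the Baker domain $B$ exhausts the Fatou set (one way to see this is that every orbit in the Fatou set is normal, and a standard argument using the invariance of the right half-plane and the wandering-domain theorem of Eremenko--Lyubich rules out any other periodic or wandering Fatou component for these specific entire functions). Once this is cited, the conclusion is immediate.

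The main obstacle is the inductive estimate in the first step, but it is conceptually simple: the half-plane $D_R$ is ``strictly forward invariant'' under $f$ in the sense that $\Re f(z) - \Re z$ is bounded below by a positive constant on $D_R$, so the exponentially decaying perturbation coming from $h(w_n)$ cannot drive orbits out. No delicate geometric control is needed because we only need convergence to infinity, not a precise asymptotic.
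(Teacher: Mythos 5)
Your argument is correct and follows essentially the same route as the paper: reduce via Proposition~\ref{lem:absorbing implies everything} to showing that a right half-plane $\{\Re z > R\}$ bulges (it is an absorbing domain by Lemma~\ref{lem:basic Baker 1d}), estimate directly that $F^n\to[1:0:0]$ uniformly on $\{\Re z>R\}\times\D(0,\epsilon)$ for $\epsilon$ small, and then invoke Proposition~\ref{prop: properties Baker} together with the fact that the Fatou set of $f$ equals $B$. What you contribute beyond the paper is the explicit inductive bound --- summing $\Re z_{n+1}-\Re z_n \ge \Re a - e^{-R} - |h(w_n)|$ and controlling $\sum_n|h(w_n)|\le M\epsilon/(1-\lambda)$ --- which the paper compresses into ``an easy computation''; that estimate is sound (the telescoping keeps $\Re z_n>R$ and yields linear growth). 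One small caution on the last step: your parenthetical justification of ``the Fatou set of $f$ equals $B$'' via the Eremenko--Lyubich no-wandering-domains theorem does not actually apply, since $f(z)=z+a+e^{-z}$ has critical values $a+1+2\pi i k$ escaping to infinity and hence lies in neither class $\mathcal{S}$ nor class $\mathcal{B}$. The paper likewise asserts this fact without proof, but the correct source is the classical analysis of Fatou's example (Fatou's original argument, or Bergweiler's survey) rather than class-$\mathcal{B}$ theory; this does not affect the validity of your proof, only of the aside.
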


\begin{proof}
Thanks to Proposition~\ref{lem:absorbing implies everything}, it suffices to prove that there is a bulging right half plane $\{z\in\C\mid \re z >R\}$ with $R>0$ sufficiently large, since by Lemma~\ref{lem:basic Baker 1d}, such half plane will be an absorbing domain for $f$. Using the fact that $h(0)= 0$ and the continuity of $h$ at $0$, an easy computation shows that for $R>0$ sufficiently large and $\epsilon>0$ sufficiently small, $F^n$ converges to $[1:0:0]$ as $n\ra\infty$ uniformly on the set 
$$
W_{\epsilon, R}:=\{z,w\in\C^2: \Re z>R,  |w|<\epsilon\},
$$
that is, the right half plane $\{z\in\C\mid \re z >R\}$ is bulging.
Since the only Fatou component for $f$ is  the Baker domain $B$, it follows from Proposition~\ref{prop: properties Baker} that  $\Omega\cap\{w=0\}=B$.
\end{proof}

It is reasonable to ask whether one can characterize the errors for which we have a bulging Baker domain, at least for the class of Fatou functions. 
For more complex error terms, it is necessary to quantify their growth. One approach uses the order, as presented in greater generality in \cite[Chapter 3]{Ronkin1974} and \cite{Ronkin1989}. Here, we restrict ourselves to the following definitions.

\begin{defn}[cf. {\cite[Sections 1.1 and 1.2]{Ronkin1989}}]
	Let $f : \mathbb{C}^2 \to \mathbb{C}$ be entire and non-constant. The \emph{maximum modulus} of $f$ at radii $r_1, r_2 >0$ is defined as
	\begin{equation*}
		M(r_1, r_2; f) := \max_{\lvert z \rvert \leq r_1, \lvert w \rvert \leq r_2} \lvert f(z, w) \rvert.
	\end{equation*}
	The \emph{order of $f$} is defined as
	\begin{equation*}
		\rho(f) := \limsup_{r\to\infty} \frac{\log^{+} \log M(r, r; f)}{\log r} \geq 0.
	\end{equation*}
	The \emph{order of $f$ in the $z$ variable, or first variable,} is defined as
	\begin{equation*}
		\rho_1(f) := \limsup_{r_1\to\infty} \frac{\log^{+} \log M(r_1, r_2; f)}{\log r_1} \geq 0
	\end{equation*}
	for some $r_2 > 0$.
\end{defn}

\begin{rem}\label{REMARK:OrderOfAFunction}
	\begin{enumerate}
		\item As discussed in \cite[Section 1.2]{Ronkin1989}, the definition of $\rho_1(f)$ is independent of the choice of $r_2 > 0$.
		\item Every polynomial has order $0$. Similarly, any function that is polynomial in $z$, i.e., is of the form
		\begin{equation*}
			(z,w) \mapsto \sum_{k=0}^{d} h_k(w) z^k
		\end{equation*}
		for $d \in \mathbb{N}_{0}$ and $h_k : \mathbb{C} \to \mathbb{C}$ holomorphic functions, has order $0$ in the $z$ variable.
		\item Simple examples of non-zero, finite order in one-dimension are given by entire transcendental functions of the form
		\begin{equation*}
			\exp_k : \mathbb{C} \to \mathbb{C}, z \mapsto e^{z^k},
		\end{equation*}
		as they satisfy
		\begin{equation*}
			\rho(\exp_k) = k
		\end{equation*}
		for $k \in \mathbb{N}_{0}$.
	\end{enumerate}
\end{rem}

%\begin{lem}\label{Lemma:Baker:AbsorbingDomain}
%	Consider the function $f : \mathbb{C} \to \mathbb{C}, z \mapsto z + a + e^{-z}$ for some $a \in \mathbb{C} $ with $\Re(a)> 0$.
%	For every $R> \log \frac{2}{\Re(a)}$ the domain $\lbrace \Re(z) > R \rbrace$ is an absorbing domain of the Baker domain of $f$.
%\end{lem}
%
%\begin{proof}[Proof of Lemma {\ref{Lemma:Baker:AbsorbingDomain}}]
%	\textbf{Not here.}
%\end{proof}

\begin{thm}\label{Theorem:BakerProductErrorTEF}
	Let $g : \mathbb{C}\to\mathbb{C}$ be a non-constant entire function with an attracting fixed point at $0$, let $f : \mathbb{C} \to \mathbb{C}$ be a Fatou function of the form $f(z) = z + a + e^{-z}$ for some $a \in \mathbb{C} $ with $\Re(a) > 0$, and let $h : \mathbb{C}^2 \to \mathbb{C}$ be holomorphic with order $\rho_1(h)$ in the first variable. If 
	\begin{enumerate}
		\item $\rho_1(h) < 1$, or
		\item $0$ is super-attracting for $g$ and $\rho_1(h) < \infty$,
	\end{enumerate}
	then the Baker domain of $f$ bulges for the skew product $F : \mathbb{C}^2 \to \mathbb{C}^2$
	\begin{equation*}
		F(z,w) = (f(z) + wh(z,w), g(w)).
	\end{equation*}	
\end{thm}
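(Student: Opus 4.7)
By Proposition~\ref{lem:absorbing implies everything} combined with Lemma~\ref{lem:basic Baker 1d}, it is enough to show that for some $R>0$ the half-plane $\{\re z > R\}\times\{0\}$ is contained in a single Fatou component of $F$. The plan is to show that every point $p_0=(z_0^*,0)$ with $\re z_0^*>R$ admits a bidisk neighbourhood $\D(z_0^*,\rho)\times\D(0,\epsilon)$, with $\rho,\epsilon>0$ allowed to depend on $z_0^*$, on which $F^n\to [1:0:0]$ locally uniformly; connectivity of $\{\re z > R\}\times\{0\}$ then places all such neighbourhoods in the same Fatou component, and Proposition~\ref{lem:absorbing implies everything} lifts the bulging from the absorbing domain $\{\re z>R\}$ to the whole Baker domain $B$.

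Fix such a $p_0$ and start the orbit $(z_n,w_n):=F^n(z_0,w_0)$ from $(z_0,w_0)\in\D(z_0^*,\rho)\times\D(0,\epsilon)$. I would bootstrap the three inductive estimates
\begin{equation*}
\re z_n > R,\qquad |z_n|\le |z_0^*|+\rho+Kn,\qquad |w_n|\le \mu(n),
\end{equation*}
where $K$ is a constant slightly larger than $|a|$ and $\mu(n)$ is the decay profile supplied by $g$: in case (i), shrinking $\epsilon$ gives $|g(w)|\le\lambda|w|$ on $\D(0,\epsilon)$ for some $\lambda\in(|g'(0)|,1)$, hence $\mu(n)=\lambda^n\epsilon$; in the super-attracting case (ii), the recursion $|g(w)|\le C|w|^d$ with $d\ge 2$ yields a doubly exponential decay of the form $\mu(n)\le C_1 r^{d^n}$ for some $r<1$ that is small when $\epsilon$ is small. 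Using $z_{n+1}=z_n+a+e^{-z_n}+w_n h(z_n,w_n)$, the term $|e^{-z_n}|\le e^{-R}$ is negligible, and the argument reduces to controlling $|w_n h(z_n,w_n)|$.

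This is where the order hypothesis enters. Fix $\sigma\in(\rho_1(h),1)$ in case (i) and any finite $\sigma>\rho_1(h)$ in case (ii). By definition of $\rho_1(h)$, there is $r_0>0$ with $|h(z,w)|\le \exp(|z|^\sigma)$ whenever $|z|\ge r_0$ and $|w|\le\epsilon$. Combining with the inductive bounds gives
\begin{equation*}
|w_n h(z_n,w_n)|\le \mu(n)\exp\!\bigl((|z_0^*|+\rho+Kn)^\sigma\bigr).
\end{equation*}
In case (i), $\sigma<1$ makes the exponential factor grow subexponentially in $n$, hence is dominated by $\lambda^n$; the supremum of the right-hand side over $n\ge 0$ is therefore a finite quantity depending on $z_0^*$, $\lambda$, $K$, and it can be driven below $(K-|a|)/2$ by shrinking $\epsilon$. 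In case (ii), the doubly exponential decay of $\mu(n)$ swallows any single-exponential factor $\exp((Kn)^\sigma)$, and again the supremum is finite and made small by shrinking $\epsilon$. With the per-step perturbation bounded by $(K-|a|)/2$, the induction closes with $\re z_{n+1}\ge \re z_n+\re(a)/2$ and $|z_{n+1}|\le |z_n|+K$, so $\re z_n\to\infty$ and $|w_n|\to 0$, proving $F^n\to [1:0:0]$ uniformly on the bidisk.

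The main obstacle I foresee is the self-consistency of this bootstrap: the upper bound on $|z_n|$ is needed to estimate $|h|$, yet the smallness of $|h|$ is what sustains that upper bound. This is handled by choosing $K$ strictly larger than $|a|$ so that the per-step deficit $K-|a|$ absorbs the perturbation, and by shrinking $\epsilon$ to force that perturbation under the deficit. The role of the dichotomy in the hypotheses is structural: when $\sigma<1$ the subexponential growth of $h$ is already outpaced by the single-exponential decay of $|w_n|$ coming from plain attraction; once $\sigma\ge 1$ the growth of $h$ becomes genuinely exponential in $n$ and can only be defeated by the doubly exponential decay that super-attraction provides, which is exactly what case (ii) buys.
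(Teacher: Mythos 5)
Your proof takes essentially the same route as the paper's: reduce via Proposition~\ref{lem:absorbing implies everything} to bulging of the half-plane absorbing domain of Lemma~\ref{lem:basic Baker 1d}, then close an induction on $\re z_n$ and $|z_n|$ in which the perturbation $|w_n h(z_n,w_n)|$ is bounded by $\mu(n)\exp(|z_n|^\sigma)$ using the order hypothesis, with $\mu(n)$ decaying exponentially in case (i) (outpacing the subexponential $\exp(n^\sigma)$, $\sigma<1$) and doubly exponentially in case (ii) (swallowing $\exp(n^\sigma)$ for any finite $\sigma$). The paper packages the same induction in Lemma~\ref{Lemma:Baker:RealPartToInftyVar2}, working over the exhaustion $U_{L,R}$ rather than point-by-point bidisks, but the estimates and the role of the dichotomy are identical.
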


\begin{rem}
	As a consequence, we have bulging for any error term which is polynomial in the $z$ variable, so Corollary~\ref{cor:bulging Baker domain} can also  be deduced as a corollary of Theorem~\ref{Theorem:BakerProductErrorTEF}.
\end{rem}

The rest of this section is devoted to the proof of Theorem~\ref{Theorem:BakerProductErrorTEF}.  We will need the following lemma. 

\begin{lem}\label{Lemma:Baker:RealPartToInftyVar2}
	Under the assumptions of Theorem \ref{Theorem:BakerProductErrorTEF}, there exists $L > 0$ large enough such that given any 
	\begin{equation*} 
		L \leq r < R 
	\end{equation*}
	there exists $\varepsilon >0 $ such that for all $k\in\mathbb{N}$, $z \in U_{r,R} := \lbrace z \in \mathbb{C} \mid \Re(z)>u, \lvert z \rvert < R \rbrace$, and $w \in \D(0,\varepsilon)$ we have
	\begin{enumerate}%[label=(\roman*)]
		\item $\Re(z_k)>  r + \frac{k\Re(a)}{4} \ge L$, and \label{Lemma:Baker:RealPartToInftyVar2:ProofProp1}
		\item $ \lvert z_k \rvert < R + 2k \lvert a \rvert$, \label{Lemma:Baker:RealPartToInftyVar2:ProofProp2}
	\end{enumerate}
	where $(z_k, w_k) := F^k(z,w)$.
\end{lem}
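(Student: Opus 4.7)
I would prove both properties by a simultaneous induction on $k$. The base case $k=0$ is immediate from the definition of $U_{r,R}$ (reading the condition as $\Re(z)>r$). For the inductive step, the first-coordinate dynamics gives $z_{k+1} = z_k + a + e^{-z_k} + w_k h(z_k, w_k)$, whence
\begin{align*}
\Re(z_{k+1}) &\ge \Re(z_k) + \Re(a) - |e^{-z_k}| - |w_k h(z_k, w_k)|, \\
|z_{k+1}| &\le |z_k| + |a| + |e^{-z_k}| + |w_k h(z_k, w_k)|.
\end{align*}
Choosing $L$ large enough so that $e^{-L} \le \Re(a)/4$, the inductive hypothesis $\Re(z_k) \ge L$ immediately gives $|e^{-z_k}| \le \Re(a)/4$. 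Hence, it is enough to ensure $|w_k h(z_k, w_k)| \le \Re(a)/2$ uniformly in $k$; granted that, both inductive estimates close directly, since $\Re(a) \le |a|$ suffices for the modulus bound.

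The heart of the matter is thus the uniform estimate on $|w_k h(z_k, w_k)|$. By the inductive hypothesis we may assume $|z_k| \le R + 2k|a|$. Fixing once and for all some $r_2 > \varepsilon$ so that $w_k \in \overline{\D(0,r_2)}$ for every $k$, the definition of $\rho_1(h)$ yields, for any $\delta > 0$ and for all $k$ large enough,
\begin{equation*}
|h(z_k, w_k)| \le \exp\bigl((R + 2k|a|)^{\rho_1(h) + \delta}\bigr),
\end{equation*}
with a comparable (finite) bound for the remaining finite set of small $k$. In case (1), since $|g'(0)|<1$, for $\varepsilon$ small enough there is $\lambda \in (|g'(0)|,1)$ with $|w_k| \le \lambda^k \varepsilon$; picking $\delta$ with $\rho_1(h) + \delta < 1$, the quantity $\lambda^k \exp\bigl((R + 2k|a|)^{\rho_1(h)+\delta}\bigr)$ is bounded by some constant $M$ uniformly in $k$, because the exponent grows like $k^{\rho_1(h)+\delta}=o(k)$ and is thus dominated by $k\log(1/\lambda)$. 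Taking $\varepsilon \le \Re(a)/(2M)$ gives the required bound. In case (2), writing $g(w)= c_m w^m + O(w^{m+1})$ near $0$ with $m\ge 2$ and iterating $|g(w)| \le C|w|^m$ yields $|w_k| \le C^{-1/(m-1)}\bigl(C^{1/(m-1)}\varepsilon\bigr)^{m^k}$, so $|w_k|$ decays double-exponentially in $k$; this dominates the single-exponential upper bound on $|h(z_k, w_k)|$, so once again $|w_k h(z_k,w_k)|$ can be made uniformly $\le \Re(a)/2$ by choosing $\varepsilon$ small.

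\textbf{Main obstacle.} The delicate point is producing a \emph{single} $\varepsilon$ that works simultaneously at every iterate. In case (1), both the growth of $|h(z_k,w_k)|$ driven by the linear-in-$k$ bound on $|z_k|$, and the decay of $|w_k|$, are of exponential type in $k$, and the hypothesis $\rho_1(h)<1$ is precisely what tips the balance in favour of the decay. In case (2), the iteration of a super-attracting germ yields a stronger double-exponential decay, which absorbs any finite growth rate $\rho_1(h)$. Once the uniform estimate $|w_k h(z_k,w_k)| \le \Re(a)/2$ is established, the induction closes automatically, yielding both (\ref{Lemma:Baker:RealPartToInftyVar2:ProofProp1}) and (\ref{Lemma:Baker:RealPartToInftyVar2:ProofProp2}).
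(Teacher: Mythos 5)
Your proposal is correct and follows essentially the same path as the paper's proof: simultaneous induction on the two estimates, splitting $z_{k+1}=z_k+a+e^{-z_k}+w_kh(z_k,w_k)$, choosing $L$ to kill the exponential term, and showing that $\lvert w_k\rvert\,\lvert h(z_k,w_k)\rvert$ stays uniformly below a fixed fraction of $\Re(a)$ by comparing the geometric (resp.\ double-exponential) decay of $\lvert w_k\rvert$ against the at-most $\exp(k^{\rho_1(h)+\delta})$ growth allowed by the order hypothesis. The only cosmetic differences are that the paper splits the uniform-$\varepsilon$ argument into a tail ($k\ge N$) and a finite head, whereas you bound $\sup_k \lambda^k\exp\bigl((R+2k\lvert a\rvert)^{\rho_1(h)+\delta}\bigr)$ directly, and your phrase ``single-exponential upper bound on $\lvert h(z_k,w_k)\rvert$'' is slightly loose (the bound is $\exp(\mathrm{poly}(k))$, which can exceed single-exponential when $\rho_1(h)>1$, but it is still dominated by the $e^{-c\,m^k}$ decay of $\lvert w_k\rvert$, so the conclusion stands).
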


\begin{proof}[Proof of Lemma {\ref{Lemma:Baker:RealPartToInftyVar2}}]
    Since $0$ is an attracting fixed point for $g$, that is $g(0) = 0$ and $\lvert g'(0) \rvert < 1$, there exists $0 < \alpha < 1$ and $\delta_g > 0$ such that for all $w \in D(0,\delta_g)$
	\begin{equation}
		\lvert g(w) \rvert \leq \alpha \lvert w \rvert.
	\end{equation}
Suppose first that  $\rho_1(h) < 1$.  Then we can choose $\tau > 0$ such that ${\rho_1(h) + \tau < 1}$. By definition, there exists $L > 0$ such that for all $\sigma \geq L$
	\begin{equation*}
		\log^+ \log M(\sigma, \delta_g; h) \leq (\rho_1(h) + \tau) \log \sigma 
	\end{equation*}
	and so
	\begin{equation*}
		M(\sigma, \delta_g; h) \leq e^{\sigma^{\rho_1(h) + \tau}}.
	\end{equation*}
	Remark that we could have chosen any other positive constant for the $w$ coordinate. 
	
	Furthermore, without loss of generality, we may assume $L > \log \frac{2}{\Re(a)}$, so that the right half plane $\{z\in\C\mid \re z>L\}$ is an absorbing domain for $f$. %this is not really needed but we keep it 
	Then, consider any radii satisfying $R>r\ge L$.
	We have
	\begin{equation*}
		e^{(R + 2k \lvert a \rvert)^{\rho_1(h) + \tau}} \lvert \alpha \rvert^k  = e^{(R + 2k \lvert a \rvert)^{\rho_1(h) + \tau}} e^{k \log \lvert \alpha \rvert} \to 0
	\end{equation*}
	as $k \to \infty$, since $\rho_1(h) + \tau < 1$ and $\log\lvert \alpha \rvert < 0$. 
	Thus, there exists $N = N(R)\ge0$ such that for all $k \geq N$
	\begin{equation*}\label{Lemma:Baker:RealPartToInftyVar2:ProofPropN}
		e^{(R + 2k \lvert a \rvert)^{\rho_1(h) + \tau}} e^{k  \log \lvert \alpha \rvert} < \frac{\Re(a)}{4}.
	\end{equation*}
	We now choose any $0 < \varepsilon < \min\lbrace 1,\delta_g \rbrace$ such that
	\begin{equation*}\label{Lemma:Baker:RealPartToInftyVar2:ProofPropEps}
		e^{(R + 2k \lvert a \rvert)^{\rho_1(h) + \tau}} e^{k  \log \lvert \alpha \rvert} \varepsilon < \frac{\Re(a)}{4}
	\end{equation*}
	for all $k \in \lbrace 0, \dots, N-1 \rbrace$.
	Therefore, for any $k \in \mathbb{N}$ we have
	\begin{equation}\label{Lemma:Baker:RealPartToInftyVar2:ProofPropCombined}
		e^{(R + 2k \lvert a \rvert)^{\rho_1(h) + \tau}} e^{k  \log \lvert \alpha \rvert} \varepsilon < \frac{\Re(a)}{4}.
	\end{equation}
	
	We can now prove properties~\eqref{Lemma:Baker:RealPartToInftyVar2:ProofProp1} and \eqref{Lemma:Baker:RealPartToInftyVar2:ProofProp2} by induction over $k$. Take  $z \in U_{r,R}$ and $ w \in \D(0,\varepsilon)$. The case $k=0$ is obvious.
	
	Assume that both properties hold for some $k\in\mathbb{N}$. In particular, we have $\lvert z_k \rvert \geq \Re(z_k) > L > \log \frac{2}{\Re(a)}$ and thus $\left\lvert e^{-z_k} \right\rvert < \frac{\Re(a)}{2} \leq \frac{\lvert a \rvert}{2} $. 
	Observe that for all $z \in \mathbb{C}$ with $\lvert z \rvert \geq L$, and $w \in \D(0,\varepsilon)$
	\begin{equation}
		\label{Lemma:Baker:RealPartToInftyVar2:ErrorEquation}
		\begin{split}
			\lvert h(z,w) \rvert &\leq M(\lvert z \rvert, \delta_g;h) \leq e^{\lvert z \rvert^{\rho_1(h) + \tau}}.
		\end{split}
	\end{equation}
	Then,
	\begin{equation}
		\label{Lemma:Baker:RealPartToInftyVar2:ProofPropUpwardReal}
		\begin{split}
			\lvert z_{k+1} \rvert &= \left\lvert z_k + a + e^{-z_k} + w_k h(z_k, w_k) \right\rvert \\
			&\leq \lvert z_{k} \rvert + \lvert a \rvert + \left\lvert e^{-z_k} \right\rvert + \lvert w_k h(z_k, w_k) \rvert \\
			&\leq R + 2k \lvert a \rvert + \lvert a \rvert +  \frac{\lvert a \rvert}{2} + \lvert w_k \rvert  e^{\lvert z_k \rvert^{\rho_1(h) + \tau}}.
		\end{split}
	\end{equation}
	The induction hypothesis yields $\lvert z_k \rvert < R + 2k \lvert a \rvert$ and therefore,
	\begin{equation*}
		\begin{split}
			e^{\lvert z_k \rvert^{\rho_1(h) + \tau}} &\leq e^{( R + 2k \lvert a \rvert )^{\rho_1(h) + \tau}}.
		\end{split}
	\end{equation*}
	For $w_k$, we have
	\begin{equation*}
		\begin{split}
			\lvert w_k \rvert = \lvert g^k(w) \rvert \leq \lvert \alpha \rvert^k \lvert w \rvert \leq e^{k \log \lvert \alpha \rvert} \varepsilon.
		\end{split}
	\end{equation*}
	Applying these inequalities to \eqref{Lemma:Baker:RealPartToInftyVar2:ProofPropUpwardReal} together with \eqref{Lemma:Baker:RealPartToInftyVar2:ProofPropCombined}, we obtain
	\begin{equation*}
		\begin{split}
			\lvert z_{k+1} \rvert &\leq R + (2k + 1) \lvert a \rvert +  \frac{\lvert a \rvert}{2} + \frac{\Re(a)}{4} \\
			&\leq R + (2k + 1) \lvert a \rvert +  \frac{\lvert a \rvert}{2} + \frac{\lvert a \rvert}{4}\\
			&< R + 2(k+1) \lvert a \rvert.
		\end{split}
	\end{equation*}
	Similarly, we deduce
	\begin{equation*}
		\begin{split}
			\Re(z_{k+1}) &= \Re\left( z_k + a + e^{-z_k} + w_k h(z_k, w_k) \right) \\
			&\geq \Re(z_k) + \Re(a) - \left\lvert e^{-z_k} \right\rvert - \lvert w_k h(z_k, w_k) \rvert \\
			&> r + \frac{k\Re(a)}{4} + \Re(a) - \frac{\Re(a)}{2}  - e^{(R + 2k \lvert a \rvert)^{\rho_1(h) + \tau}} e^{k  \log \lvert \alpha \rvert} \varepsilon\\
			&> r + \frac{k\Re(a)}{4} + \frac{\Re(a)}{2} - \frac{\Re(a)}{4}\\
			&= r + \frac{(k+1)\Re(a)}{4}.
		\end{split}
	\end{equation*}
	This completes the proof for $\rho_1(h) < 1$.
	
	We now assume that $0$ is super-attracting for $g$ and $\rho_1(h) < \infty$. Hence, $g'(0) = \dots = g^{(d-1)}(0) = 0$ and $g^{(d)}(0) \neq 0$ for some $d \geq 2$. 
	As a consequence, there exists $\widetilde{g} : \mathbb{C} \to \mathbb{C}$ such that $g(w) = w^d \widetilde{g}(w)$ for all $w \in \mathbb{C}$. For $\delta_g > 0$ define
	\begin{equation*}
		C_g := \max_{\lvert w \rvert \leq \delta_g} \lvert \widetilde{g}(w) \rvert > 0.
	\end{equation*}
	Up to choosing a smaller $\delta_g$, we may assume that the inequality $\delta_g^d C_g \leq \delta_g$ holds.
    This ensures that for all $w \in D(0,\delta_g)$ and $k\in\mathbb{N}$
	\begin{equation}\label{Lemma:Baker:RealPartToInftyVar2:ProofPropIteratesUnderg}
		\begin{split}
			\lvert w_k \rvert &= \lvert g^k(w) \rvert \\
			&\leq C_g^{1 + d + \dots + d^{k-1}} \lvert w \rvert^{d^k}.
		\end{split}
	\end{equation}
	Set
	\begin{equation*}
		C := \frac{\log C_g}{1-d}.
	\end{equation*}
	Similarly to the case $\rho_1(h)<1$, we choose $L > 0$ such that for all $\sigma \geq L$
	\begin{equation*}
		M(\sigma, \delta_g; h) \leq e^{\sigma^{\rho_1(h) + 1}}.
	\end{equation*}
	Without loss of generality, we may assume $L > \log \frac{2}{\Re(a)}$. Consider any radii satisfying $R>r\ge L$. 
	For $t > 0$ small enough such that
	\begin{equation*}
		\log \frac{1}{t} + C > 0
	\end{equation*}
	we have
	\begin{equation*}
		e^{(R + 2k \lvert a \rvert)^{\rho_1(h) + 1}} e^{C}e^{-d^k (\log \frac{1}{t} + C)} \to 0
	\end{equation*}
	as $k \to \infty$ since $\rho_1(h) + 1 < \infty$.
	This convergence is uniform on $(0,t]$. Thus, there exists $N = N(R,t)\ge0$ such that for all $k \geq N$ and $\varepsilon \in (0,t]$ we have
	\begin{equation*}\label{Lemma:Baker:RealPartToInftyVar2:ProofPropN2}
		e^{(R + 2k \lvert a \rvert)^{\rho_1(h) + 1}} e^{C}e^{-d^k (\log \frac{1}{\varepsilon} + C)} < \frac{\Re(a)}{4}.
	\end{equation*}
	We now choose any $0 < \varepsilon < \min\lbrace t, \delta_g \rbrace$ such that
	\begin{equation*}\label{Lemma:Baker:RealPartToInftyVar2:ProofPropEps2}
		e^{(R + 2k \lvert a \rvert)^{\rho_1(h) + 1}} e^{C - d^k C}\varepsilon^{d^k} =  e^{(R + 2k \lvert a \rvert)^{\rho_1(h) + 1}} e^{C}e^{-d^k (\log \frac{1}{\varepsilon} + C)} < \frac{\Re(a)}{4}
	\end{equation*}
	for all $k \in \lbrace 0, \dots, N-1 \rbrace$. 
	Therefore, for all $k \in \mathbb{N}$ we have
	\begin{equation}\label{Lemma:Baker:RealPartToInftyVar2:ProofPropCombined2}
		e^{(R + 2k \lvert a \rvert)^{\rho_1(h) + 1}} e^{C}e^{-d^k (\log \frac{1}{\varepsilon} + C)} < \frac{\Re(a)}{4}.
	\end{equation}
	
	The rest of the proof now follows as for $\rho_1(h) < 1$ by using induction over $k$ to show properties~\eqref{Lemma:Baker:RealPartToInftyVar2:ProofProp1} and \eqref{Lemma:Baker:RealPartToInftyVar2:ProofProp2}. We just need to observe that in the induction step, we use that for all $z \in U_{r,R}$ and $ w \in \D(0,\varepsilon)$, the inequality \eqref{Lemma:Baker:RealPartToInftyVar2:ErrorEquation} together with \eqref{Lemma:Baker:RealPartToInftyVar2:ProofPropIteratesUnderg} and \eqref{Lemma:Baker:RealPartToInftyVar2:ProofPropCombined2} yields
	\begin{equation*}
		\begin{split}
			\lvert w_k h(z_k, w_k) \rvert &\leq e^{\lvert z_k \rvert^{\rho_1(h) + 1}}  \lvert w_k \rvert \\
			&\leq e^{\lvert z_k \rvert^{\rho_1(h) + 1}}  C_g^{1 + d + \dots + d^{k-1}} \lvert w \rvert^{d^k}\\
            &\leq e^{(R + 2k \lvert a \rvert)^{\rho_1(h) + 1}} e^{\frac{1 - d^k}{1 - d} \log C_g} \varepsilon^{d^k} \\
            &\leq e^{(R + 2k \lvert a \rvert)^{\rho_1(h) + 1}} e^{(1 - d^k) C} e^{d^k \log \varepsilon} \\
			&\leq e^{(R + 2k \lvert a \rvert)^{\rho_1(h) + 1}} e^{C}e^{-d^k (\log \frac{1}{\varepsilon} + C)} \\
			&< \frac{\Re(a)}{4}.
		\end{split}
	\end{equation*}
	for all $k\in\mathbb{N}$.
\end{proof}

%\begin{rem}
%	In the previous proof, it is possible to control the real and imaginary part separately, i.e., starting on a domain $\lbrace u \in \mathbb{C} \mid r < \Re(u) < R, \lvert \Im(u) \rvert < \widehat{R} \rbrace$, where $0 < r < R$ and $0 < \widehat{R}$, we would get the same result with slightly adjusted inequalities to compensate for the dependence on $\widehat{R}$.
%\end{rem}

\begin{proof}[Proof of Theorem~{\ref{Theorem:BakerProductErrorTEF}}]  
Thanks to Proposition~\ref{lem:absorbing implies everything}, it suffices to prove that there is a bulging absorbing domain. 
	Let $ L > \log \frac{2}{\Re(a)} $ satisfy the conclusions of Lemma~\ref{Lemma:Baker:RealPartToInftyVar2}.
	By Lemma~\ref{lem:basic Baker 1d}, the set $D := \left\lbrace z\in\C\mid \Re(z) > L \right\rbrace$ is an absorbing domain. The sets
	\begin{equation*}
		U_{L, R} := \left\lbrace  u\in \mathbb{C} \mid L < \Re(u), \lvert u \rvert < R \right\rbrace  = \left\lbrace u \in D \mid \lvert u \rvert < R \right\rbrace,
	\end{equation*}
	for $R > L $, form an open covering of $D$. We show that each $U_{L,R}$ bulges. It then follows that $D$ bulges as well.
	
	Given any $R > L$, Lemma~\ref{Lemma:Baker:RealPartToInftyVar2} yields $\varepsilon > 0$ such that
	\begin{equation*}
		\Re(z_k) \geq \frac{k\Re(a)}{4} + L
	\end{equation*}
	for all $(z,w) \in U_{L,R} \times \D(0,\varepsilon)$ and $k\in\mathbb{N}$. Thus, using $\Re(a) > 0$, we obtain
	\begin{equation*}
		\lvert z_k \rvert \geq \lvert \Re(z_k) \rvert \geq \frac{k \Re(a)}{4} + L \to \infty
	\end{equation*}
	as $k\to\infty$. Since the lower bound does not depend on $z$, we have uniform convergence towards $\infty$ in the $z$-coordinate.
	Moreover, without loss of generality, we may assume that $\varepsilon > 0$ is small enough such that $g^k(w)$ converges uniformly to $0$ as $k\to\infty$ for all $w \in \D(0,\varepsilon)$.
	
	Taken together, we have uniform convergence of the iterates on $U_{L,R} \times \D(0,\varepsilon)$ to $[1:0:0]$. In particular, this set is contained in a Fatou component of the skew product $F$, i.e., $U_{L,R}$ bulges.
\end{proof}

\section{Non-bulging Baker domains}\label{non-bulging}
  In this section we prove the following result.
  
\begin{thm}\label{THEOREM:NONBULG:BAKER}
	Let $g\colon\C\to\C$ be a non-constant holomorphic function with an attracting fixed point at $0$.
	Then there exists an entire function $h : \mathbb{C} \to \mathbb{C}$ such that the skew product $F_h \colon \mathbb{C}^2 \to \mathbb{C}^2$,
	\begin{equation*}
		 F_h(z,w) = (z + 1 + e^z + wh(z), g(w))
	\end{equation*}
	has a non-bulging Baker domain on the attracting invariant fiber $\{w = 0\}$.
\end{thm}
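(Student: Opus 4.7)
The plan is to construct $h$ by an inductive Runge-approximation argument so that the iterates $\{F_h^n\}$ fail to be normal at some point $(x_0,0)$ with $x_0$ in the Baker domain $B$ of $f_0(z)=z+1+e^z$; by Proposition~\ref{prop: properties Baker}, this places $(x_0,0)$ in the Julia set of $F_h$ and prevents $B$ from bulging. Concretely, I would produce a sequence of \emph{seed} points $(x_0,w_m^{(0)})\to(x_0,0)$ whose forward orbits under $F_h$ are, at prescribed sparse times $N_m\uparrow\infty$, kicked to a bounded sequence of targets $\zeta_m\in\C$ converging to some finite $\zeta_\infty$. Since the unperturbed Baker orbit $(x_0,0)$ is sent to infinity, this produces two arbitrarily nearby starting configurations whose $N_m$-th iterates accumulate at two different limits $(\zeta_\infty,0)\neq[1:0:0]$ in $\P^2$, destroying normality.

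\emph{Inductive construction.} Fix the Baker orbit $x_k:=f_0^k(x_0)$ using the analog of Lemma~\ref{LEMMA:OneDim}, set $\lambda:=g'(0)\in\D$ (super-attracting case analogous), and pick once and for all rapidly decreasing seeds $w_m^{(0)}\in\C^*$ and sparse kick times $N_m$. Writing $h=\sum_{n\geq 0}p_n$ as an absolutely convergent series of polynomials, at stage $n$ the partial function $h^{[n]}:=\sum_{j<n}p_j$ is already defined, so all seed orbits $(z_k^{(m)},w_k^{(m)})=F_{h^{[n]}}^k(x_0,w_m^{(0)})$ for $k\leq n$ are determined. Choose a slowly increasing sequence of radii $R_n$ containing the already-kicked orbit segments but with $z_n^{(m)}$ outside $\overline{\D(0,R_{n-1})}$ for the unique \emph{active} index $m$ (the one with $N_m=n+1$). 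On the compact set
\[
K_n:=\overline{\D(0,R_{n-1})}\cup\{z_n^{(m)}\}\cup\{z_n^{(m')} : N_{m'}>n\},
\]
which has connected complement, Runge's theorem provides a polynomial $p_n$ with $\|p_n\|_{\overline{\D(0,R_{n-1})}}<2^{-n}$, with $|p_n(z_n^{(m')})|<2^{-n}$ at every dormant seed, and with $w_n^{(m)}p_n(z_n^{(m)})=\zeta_m-f_0(z_n^{(m)})+O(2^{-n}w_n^{(m)})$. The active seed is therefore kicked to $(\zeta_m,g^{N_m}(w_m^{(0)}))$ at time $N_m$ up to negligible error, while dormant seeds are perturbed by at most $2^{-n}|w_n^{(m')}|$ per stage and hug the Baker orbit until their own kick time.

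\emph{Non-normality and main obstacle.} With $h$ built, $F_h^{N_m}(x_0,w_m^{(0)})\to(\zeta_\infty,0)$ while $F_h^{N_m}(x_0,0)=(x_{N_m},0)\to[1:0:0]$. If $\{F_h^n\}$ were normal on a neighbourhood $V$ of $(x_0,0)$, extracting a convergent subsequence of $\{F_h^{N_m}\}$ and using uniform convergence on the compact set $\{(x_0,w_m^{(0)})\}_m\cup\{(x_0,0)\}$ would force the limit $\Phi\in\Hol(V,\P^2)$ to satisfy simultaneously $\Phi(x_0,0)=[1:0:0]$ and $\Phi(x_0,0)=(\zeta_\infty,0)$, a contradiction. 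Hence $(x_0,0)$ lies in the Julia set of $F_h$, and $B$ does not bulge. The main obstacle is the non-interference of all seeds at each inductive stage: the positions $z_n^{(m')}$ for distinct active and dormant indices all cluster near $x_n$, yet $p_n$ must be astronomically large at exactly one of them and negligible at the rest. Runge's theorem handles this in principle because the relevant points are distinct and $K_n$ has connected complement; the delicate bookkeeping lies in choosing the scales $w_m^{(0)}$ and times $N_m$ so that the seed separations persist throughout the induction and the tail sums $\sum_{j>n}p_j$, which are uncontrolled outside $K_n$, do not undo the carefully placed kicks at later stages.
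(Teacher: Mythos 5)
Your high-level strategy coincides with the paper's: iterate Runge's theorem to build an entire $h$ whose skew product kicks a sequence of seeds $(x_0,w)$ with $w\to 0$ back into a bounded region at prescribed times, then derive a contradiction with normality on any neighborhood of $(x_0,0)$. Your non-normality argument at the end is fine, and indeed slightly streamlined (a single limit point would have to be both $[1:0:0]$ and a finite point). However, the inductive construction as you set it up has a genuine gap, and the gap is precisely the part you flag as ``delicate bookkeeping.''

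The difficulty is the feedback between the enormous gradient of $p_n$ near the kick locus and the drift of seed positions under later corrections. In your scheme all seeds $w_m^{(0)}$ are fixed in advance, each $p_n$ is controlled by the \emph{uniform} bound $2^{-n}$ on $\overline{\D(0,R_{n-1})}$ and at the finitely many dormant points, and the active kick is enforced at a \emph{single point} $z_n^{(m)}$. Since the active point and the dormant points all cluster near $x_n$ at comparable (very small) mutual distances governed by the fixed scales $w_m^{(0)}$, the polynomial $p_n$ must jump from a value of size $\sim |g^n(w_m^{(0)})|^{-1}$ to size $\sim 2^{-n}$ over a tiny gap; its local Lipschitz constant there is therefore astronomically larger than $2^n$. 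But earlier corrections $p_0,\dots,p_{n-1}$ (and later tails) shift the dormant and active positions by amounts of order $2^{-n}$, which, multiplied by that Lipschitz constant, is far from negligible. Nothing in your setup makes these errors stay small, and a uniform $2^{-n}$ budget cannot absorb them.

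The paper avoids this in two complementary ways that you do not have. First, the target value of $h$ on the $n$-th kick is prescribed on the \emph{whole disk} $D_n=\overline{\D(x_0+n,4\delta)}$, not at a single orbit point, so the Runge approximant is close to a constant on a fixed-size neighbourhood of the orbit point, with no dangerous gradient. Second, and more importantly, the seed $w_n$ is chosen \emph{after} $h_{n-1}$ is fixed, by continuity of $w\mapsto F^k_{h_{n-1}}(x_0,w)$ at $0$, and then a tolerance $\delta_n$ is chosen (again after $w_n$) so that any entire $\widetilde h$ with $\|\widetilde h-h_{n-1}\|_{\infty,H_n}<\delta_n$ gives the same tracking of the fiber orbit up to error $\delta$; all later corrections are made smaller than $\tfrac{1}{2^{k+1}}\min\{\delta_0,\dots,\delta_k\}$, not merely $2^{-k}$. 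This adaptive (seed then tolerance) ordering is what makes the telescoping sum converge to an $h$ that still realizes the kicks. Fixing the seeds $w_m^{(0)}$ and the budgets $2^{-n}$ in advance, as you do, leaves the interference unresolved; to close the gap you would essentially need to re-import the paper's adaptive choice of seeds and tolerances and to replace pointwise kick constraints with constraints on whole disks.
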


We will need the following result and the immediate corollary.

\begin{lem}[Runge's Theorem, see for example {\cite[p. 459]{Eremenko_Ljubich_1987}}]\label{Lemma:Runge:Gen}
    Let $K \subset \C$ be compact with connected complement. Then any function holomorphic  in a neighbourhood of $K$ can be uniformly approximated by polynomials.
\end{lem}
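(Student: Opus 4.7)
The plan is to prove the lemma via the classical two-step route: first approximate $f$ uniformly on $K$ by rational functions with poles in $\C \setminus K$, then carry out a ``pole-pushing'' argument to replace those rational functions by polynomials. The hypothesis that $\C \setminus K$ is connected is used exactly at the second step.

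First I would fix a neighbourhood $U \supset K$ on which $f$ is holomorphic, and construct a cycle $\Gamma \subset U \setminus K$ that winds once around every point of $K$. The standard device is to overlay a square grid on $\C$ of mesh $\delta < \tfrac{1}{2}\dist(K, \partial U)$, let $V$ be the union of the closed grid squares that meet $K$, and take $\Gamma = \partial V$ with the induced orientation. Then $K \subset V \subset U$ and $\Gamma \subset U \setminus K$. Cauchy's formula yields, for every $z \in K$,
\[
f(z) = \frac{1}{2\pi i}\int_\Gamma \frac{f(\zeta)}{\zeta - z}\, d\zeta.
\]
Since $\dist(K,\Gamma)>0$ and $f$ is continuous on the compact cycle $\Gamma$, Riemann sums for this integral converge uniformly in $z \in K$, and each Riemann sum is a rational function of $z$ whose poles lie on $\Gamma$, hence in $\C \setminus K$. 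Thus $f$ is a uniform limit on $K$ of rational functions with poles off $K$.

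Next I would carry out pole-pushing. By linearity and partial fractions, it suffices to prove that for every $a \in \C \setminus K$ and every integer $n \ge 1$ the function $z \mapsto (z-a)^{-n}$ is a uniform limit on $K$ of polynomials; a standard diagonal argument then converts the rational approximants from the previous step into polynomial approximants. Let $E$ be the set of $a \in \C \setminus K$ for which this polynomial approximation is possible for every $n$. The expansion
\[
(z-a)^{-n} = (z-a_0)^{-n}\sum_{k \ge 0} \binom{-n}{k}\Bigl(\frac{a_0 - a}{z-a_0}\Bigr)^{k}
\]
converges uniformly on $K$ whenever $|a-a_0|<\dist(a_0,K)$, which shows that $E$ is open in $\C \setminus K$; the same expansion with the roles of $a$ and $a_0$ reversed shows that its complement in $\C \setminus K$ is open as well. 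Finally, for $|a|$ larger than $\max_{z \in K}|z|$ the geometric expansion of $(z-a)^{-n}$ in powers of $z/a$ gives explicit polynomial approximations, so $E$ contains a neighbourhood of infinity in $\C \setminus K$ and in particular is non-empty. Connectedness of $\C \setminus K$ therefore forces $E = \C \setminus K$.

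Combining the two steps exhibits $f$ as a uniform limit of polynomials on $K$. The main technical point is the pole-pushing step: one must carefully control the dependence on $n$ in the binomial series and verify both the openness of $E$ and the existence of an initial admissible pole position (at infinity). The construction of the cycle $\Gamma$ and the Riemann-sum approximation of the Cauchy integral are, by contrast, essentially bookkeeping once the grid has been chosen.
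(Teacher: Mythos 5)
The paper does not prove this lemma at all; it is cited as a classical result (Runge's theorem) with a pointer to Eremenko--Lyubich, and the proof is taken for granted. Your argument is a correct rendering of the standard textbook proof: (i) a grid contour $\Gamma\subset U\setminus K$ together with Cauchy's formula and Riemann sums gives uniform approximation on $K$ by rational functions with poles off $K$; (ii) pole-pushing, phrased as showing that the set $E$ of admissible pole locations is nonempty (containing a neighbourhood of $\infty$), open, and closed in $\C\setminus K$, so that connectedness of $\C\setminus K$ forces $E=\C\setminus K$. The binomial expansion you write down does exactly the right thing for both the open and closed halves, and invoking the point at infinity to seed $E$ is precisely where the hypothesis that $\C\setminus K$ is connected (equivalently, that $K$ has no ``holes'') enters.

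Two small points of rigour you gloss over, neither fatal: the claim that $\Gamma=\partial V$ winds exactly once around each point of $K$ needs the observation that $K\subset\operatorname{int}V$ (every grid square touching a point of $K$ lies in $V$, so a full neighbourhood of that point does) together with cancellation of interior edges when summing the oriented boundaries $\partial Q_i$; and the ``diagonal argument'' at the end is just the triangle inequality combined with partial fractions, since each rational approximant is a finite linear combination of functions $(z-a)^{-n}$ with $a\in\C\setminus K$. With those details filled in the proof is complete and matches the classical argument the paper is implicitly relying on.
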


\begin{cor}\label{Lemma:Runge}
    Let $E_0, \dots, E_n$ be simply connected, pairwise disjoint, compact subsets of $\C$, let $\psi$ be holomorphic in a neighborhood of  $E = \bigcup_{k=0}^{n} E_k$, and let $\epsilon> 0$ be a real number.
    Then there exists an entire function $h$ such that
	\begin{equation*}
		|h(z) - \psi(z)| < \epsilon
	\end{equation*}
	for all $z \in E$.
\end{cor}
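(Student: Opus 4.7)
The plan is to apply Lemma~\ref{Lemma:Runge:Gen} (Runge's theorem) directly to the compact set $K := E = \bigcup_{k=0}^{n} E_k$ together with the given $\psi$. Since polynomials are entire, the approximating polynomial produced by Runge will serve as the required $h$.

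The only nontrivial point is to check that $\mathbb{C}\setminus E$ is connected, so that the hypotheses of Lemma~\ref{Lemma:Runge:Gen} are met; this is where the simple-connectedness and pairwise disjointness of the $E_k$ enter. I would argue by induction on $n$. For $n=0$, simple-connectedness of $E_0$ is exactly the statement that $\mathbb{C}\setminus E_0$ is connected. For the inductive step, set $F := \bigcup_{k<n} E_k$ and assume $\mathbb{C}\setminus F$ is connected; suppose for contradiction that $U$ is a bounded component of $\mathbb{C}\setminus(F\cup E_n)$. Then $\partial U \subset F \cup E_n$, and since $\dist(F,E_n)>0$ this boundary splits as a disjoint union of a closed piece in $F$ and a closed piece in $E_n$. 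When $\partial U \subset F$ the set $U$ is clopen in the connected set $\mathbb{C}\setminus F$, so $U = \mathbb{C}\setminus F$, contradicting boundedness of $U$; the symmetric case $\partial U \subset E_n$ is ruled out in the same way. The remaining mixed case, where $\partial U$ meets both $F$ and $E_n$, is excluded by the same clopen/connectedness argument together with the positive separation of $F$ and $E_n$.

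A slicker alternative avoids induction entirely via Alexander duality on the Riemann sphere: simple-connectedness of each $E_k$ gives $\check{H}^1(E_k)=0$, and the disjoint-union formula yields $\check{H}^1(E)=\bigoplus_k \check{H}^1(E_k)=0$, so $\S^2\setminus E$ is connected, and therefore $\mathbb{C}\setminus E$ is too after removing the single point at infinity.

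Once $\mathbb{C}\setminus E$ is known to be connected, Lemma~\ref{Lemma:Runge:Gen} applied to $\psi$ on $E$ directly yields a polynomial $h$ with $|h(z)-\psi(z)|<\epsilon$ on $E$, completing the proof. The main obstacle will be the topological step above, and in particular the mixed case in the induction where the boundary of a putative bounded component straddles several of the $E_k$; once that is handled, the appeal to Runge is immediate.
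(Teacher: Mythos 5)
Your strategy --- show that $\C\setminus E$ is connected and then apply Lemma~\ref{Lemma:Runge:Gen} to $K=E$ --- is exactly what the paper intends (the corollary is stated without proof, so the only content is the topological reduction), and you are right that this is where all the work lies. Your inductive argument, however, has a genuine gap at precisely the point you flag: the mixed case. That case is not vacuous --- a bounded connected open set can perfectly well have disconnected boundary (an annulus, say) --- so the clopen argument, which needs $\partial U$ to lie entirely inside one of $F$ or $E_n$, simply does not apply, and ``positive separation'' on its own does not fill the hole. The clean elementary tool that handles the mixed case is Janiszewski's theorem: if $A,B\subset\S^2$ are closed, $A\cap B$ is connected (here empty), and neither $A$ nor $B$ separates $\S^2$, then $A\cup B$ does not separate $\S^2$; one applies this inductively with $A=F$, $B=E_n$, and it is essentially the unicoherence of the sphere. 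Your Alexander-duality alternative, by contrast, is correct and complete, provided ``simply connected compact'' is read --- as you do in the base case --- to mean connected complement in $\widehat{\C}$: then $\check H^1(E_k)\cong\tilde H_0(\S^2\setminus E_k)=0$ by Alexander duality, finite additivity of \v{C}ech cohomology over a disjoint union of compacta gives $\check H^1(E)=\bigoplus_k\check H^1(E_k)=0$, Alexander duality in the other direction gives $\S^2\setminus E$ connected, and deleting the single point at infinity from this open connected surface leaves it connected. So the proposal as a whole is correct and follows the paper's implicit route; of your two justifications of the key topological fact, the cohomological one is airtight, while the inductive one needs Janiszewski (or an equivalent) to close the case you already identified as the obstacle.
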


\begin{proof}[Proof of Theorem~\ref{THEOREM:NONBULG:BAKER}]
	In this proof, we construct a skew product with a non-bulging Baker domain on an attracting fiber using Runge's Theorem iteratively. 

    Let $g : \C \to \C$ be given as in the statement, that is, $g$ non-constant holomorphic, $g(0) = 0$, and $|{g'(0)}| < 1$. 
    
    Let $0 < \delta_g < 1$ be such that the disk $\D(0,\delta_g)$ lies entirely within the immediate attracting basin of $g$ at $0$, i.e., $g^n(w) \to 0$ as $n\to \infty$ for all $w\in \D(0,\delta_g)$, and such that
    \begin{equation}\label{PROOF:NONBULG:BAKER:DefDeltaG}
        |{g(w)}| \leq |{w}|.
    \end{equation}
    In particular, $g(\D(0,\delta_g)) \subset \D(0,\delta_g)$.

    We are interested in skew products $F_h\colon\C^2 \to \C^2$ of the form
    \begin{equation}\label{PROOF:NONBULG:BAKER:DefF}
         F_h(z,w) = (f(z) + wh(z), g(w))
    \end{equation}
    where $h : \C \to \C$ is a suitably chosen holomorphic function, to be constructed later.
    On the invariant fiber, we have the Fatou's function 
    $$
    f(z)=  z + 1 + e^{-z}.
    $$ 
    From one-dimensional dynamics, as described in Lemma \ref{LEMMA:OneDim}, we know that $f$ has an invariant Baker domain $B$ containing the absorbing domain $\{z \in \C\mid\Re(z) > \log(2)\}$, and this is independent of the choice of $h$.

        The key idea is to construct $h$ in such a way, that there exist points arbitrarily close to some element in $B \times \{0\}$ which, under arbitrarily many iterations, are mapped back to a bounded neighbourhood of the origin. The details of this behaviour, as well as the construction of $h$, are provided in the following result.

    \begin{lem}\label{PROOF:NONBULG:BAKER:SUBLEMMA}
        There exists an entire function $h : \C \to \C$, a point $x_0 \in B$, and a sequence of non-zero complex numbers $(w_n)_{n \in \N}$ such that $\lim_{n\to\infty}w_n = 0$, $|{w_n}| < \delta_g$ for all $n\in\N$, and the skew product $F_h$ as in \eqref{PROOF:NONBULG:BAKER:DefF} satisfies
        \begin{equation}\label{PROOF:NONBULG:BAKER:SUBLEMMA:EQ1}
            |{\pi_z (F_h^{n+1}(x_0, w_n)})| \leq 3
        \end{equation}
        for all $n\in\N$, where $\pi_z$ denotes the projection onto the first coordinate.
    \end{lem}

\noindent\textit{Proof of Theorem~\ref{THEOREM:NONBULG:BAKER} (continued).}
	Let $h$, $x_0$, and $(w_n)_{n\in\N}$ be given by Lemma \ref{PROOF:NONBULG:BAKER:SUBLEMMA}.
    We proceed by contradiction to show that the Baker domain $B \subset \C$ of $f$ does not bulge for $F_h$. 

    We must consider the normality of the iterates of $F_h$ with respect to the compactification $\P^2(\C)$. 
    By Lemma \ref{Lemma:NormalityRelationsBakerVariant}, bulging with respect to this notion of normality implies bulging with respect to the one-point compactification $\widehat{\C^2}$.
    Hence, it suffices to prove the statement for $\widehat{\C^2}$-normality.

    Assume by contradiction that $B$ bulges to a Fatou component $\Omega \subset \C^2$ of $F_h$, i.e., $B \times \{0\} \subset \Omega$.
    Since $x_0 \in B$, the definition of normality yields an open set $V \subset \Omega$ with $(x_0, 0) \in V$ and a sequence of iterates $(F_h^{n_k})_{k\in\N}$ such that either there exists a holomorphic function $\Phi : V \to \C^2$ with
    \begin{equation}\label{PROOF:NONBULG:BAKER:ToLimitFct}
        F_h^{n_k} \to \Phi
    \end{equation}
    uniformly on $V$ as $k \to \infty$, or
    \begin{equation}\label{PROOF:NONBULG:BAKER:ToInfActual}
        F_h^{n_k} \to \infty
    \end{equation}
    uniformly on $V$ as $k \to \infty$.

    On the fiber itself, we have
    \begin{equation*}
        \pi_z (F_h^{n_k}(x_0, 0)) = f^{n_k}(x_0) \to \infty
    \end{equation*}
    as $k \to \infty$, since $x_0 \in B$ by Lemma \ref{PROOF:NONBULG:BAKER:SUBLEMMA}. 
    In particular, there cannot be a limit function $\Phi \in \Hol(V, \C^2)$ satisfying \eqref{PROOF:NONBULG:BAKER:ToLimitFct}.

    Therefore, we obtain uniform divergence to infinity in the form of \eqref{PROOF:NONBULG:BAKER:ToInfActual}, i.e., for every $M > 0$ there exists $K \in \N$ such that for every $k \geq K$ and $(z,w) \in V$
    \begin{equation}\label{PROOF:NONBULG:BAKER:ToInf}
        \|{F_h^{n_k}(z,w)}\| \geq M.
    \end{equation}
    However, we will now show that there are points in $V$ which do not satisfy this property for $M = 4$ and the corresponding constant $K$. 
    Since $V$ is open, $(x_0,0) \in V$, and $\lim_{n\to\infty} w_n = 0$, there exists $\tilde{K} \in \N$ such that $(x_0, w_{n_k - 1}) \in V$ for all $k \geq \tilde{K}$ where the points $w_n$ are those given by Lemma \ref{PROOF:NONBULG:BAKER:SUBLEMMA}.
    Furthermore, by \eqref{PROOF:NONBULG:BAKER:SUBLEMMA:EQ1}, we have
    \begin{equation}
        |{\pi_z( F_h^{n_k}(x_0, w_{n_k - 1})})| \leq 3
    \end{equation} 
    for all $k\in\N$.
    Combining these results with \eqref{PROOF:NONBULG:BAKER:DefDeltaG}, we obtain for $k \geq \max\{K, \tilde{K}\}$
    \begin{align*}
        \|{F_h^{n_k}(x_0,w_{n_k - 1})}\| &\leq |{\pi_z( F_h^{n_k}(x_0, w_{n_k - 1})})| + |{g^{n_k}(w_{n_k - 1})}| \\
        &\leq 3 + |{w_{n_k - 1}}| %\\
        \leq 3 + \delta_g \\
        &< 4 = M
    \end{align*}
    but also $(x_0, w_{n_k - 1}) \in V$, contradicting \eqref{PROOF:NONBULG:BAKER:ToInf}. 
    Hence, the assumption that $B$ bulges is false, and the proof is complete.
\end{proof}

The rest of this section is devoted to the proof of the key Lemma~\ref{PROOF:NONBULG:BAKER:SUBLEMMA}.
    \begin{proof}[Proof of Lemma~\ref{PROOF:NONBULG:BAKER:SUBLEMMA}]
        First, fix a value $0 < \delta \leq \frac{1}{10}$ for the remainder of the proof, and choose the corresponding $x_0$ given by Lemma \ref{LEMMA:OneDim}. In particular, we know that $x_0 \in B$.

        The proof then proceeds in two steps. First, we will recursively define functions $h_k$ such that $F_{h_k}$ satisfies a property like \eqref{PROOF:NONBULG:BAKER:SUBLEMMA:EQ1} for all $0 \leq n \leq k$ and then, in the second step, we will show that this sequence of functions has a limit $h$ which inherits this property.

        To control the behaviour close to the orbit $x_k = f^k(x_0)$, we first define the closed disks 
        \begin{equation*}
            D_k := \overline{\D(x_0 + k, 4\delta)}
        \end{equation*}
        centered at $x_0 + k$, and the sets 
        \begin{equation*}
            H_{k} := \left\{z \in \C\mid-k \leq \Re(z) \leq x_0 + k - \frac{1}{2}, |{\Im(z)}| \leq k+1\right\}.
        \end{equation*}
        These sets are shown in figure~\ref{fig:tikz-simplifiedBakerProof}. Note that
        \begin{equation*}
            \D(x_k, \delta) \subset D_k
        \end{equation*}
        by Lemma \ref{LEMMA:OneDim}. Furthermore, observe that the sets $D_k$ and $H_k$ are all simply connected, compact subsets of $\C$. Furthermore, the sets $D_k$ are pairwise disjoint, and satisfy $D_j \subset H_k$ for all $j < k$, while we have $D_j \cap H_k = \emptyset$ for all $j \geq k$. Finally, the sets $H_k$ form a nested covering of $\C$, that is, $H_k \subset H_{k+1}$ for all $k\in\N$, and $\cup_{k\in\N}H_k = \C$. 

        \begin{figure}[htbp]
            \centering
            \begin{tikzpicture}
                \fill[fill=gray!40]
                    (-4,2) -- (1.25,2) -- (1.25, -2) -- (-4, -2);
                \draw[-]
                    (-4,2) -- (1.25,2) -- (1.25, -2) -- (-4, -2);
                \node[above, text=black!20!gray] at (0.95, 1.3) {$H_2$};
                    
                % filled disk centered at (0,0)
                \filldraw[fill=gray!20, draw=black]
                    (-2.5,0) circle (1);
                    
                \filldraw[fill=gray!20, draw=black]
                    (0,0) circle (1);

                \filldraw[fill=gray!20, draw=black]
                    (2.5,0) circle (1);

                % Labels and details top part
                \node[above right] at (-2.5,1) {$D_0$};
                \node[above right] at (0,1) {$D_1$};
                \node[above right] at (2.5,1) {$D_2$};

                % Points
                \draw[-]
                    (-2.5, -0.1) -- (-2.5,0.1);
                \node[below] at (-2.5,-0.1) {$x_0$};
                \draw[-]
                    (0, -0.1) -- (0,0.1);
                \node[below] at (0,-0.1) {$x_0 + 1$};
                \draw[-]
                    (2.5, -0.1) -- (2.5,0.1);
                \node[below] at (2.5,-0.1) {$x_0 + 2$};

                % Radii
                \draw[-]
                    (-2.5,0) -- ({-2.5 + cos(120)}, {sin(120)});
                \node[right] at ({-2.5 + 0.5 * cos(120)}, {0.5 * sin(120)}) {\small$4\delta$};

                % Axes
                \draw[->]
                    (-4,0) -- (4,0);
                \node[below right] at (4,0) {$\Re$};
            \end{tikzpicture}
            \caption{The first disks $D_k$ together with $H_2$.}
            \label{fig:tikz-simplifiedBakerProof}
        \end{figure}
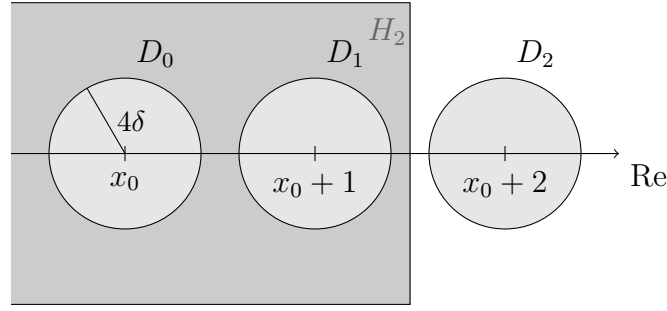
        We will obtain $h$ as limit of the entire functions constructed in the following lemma, using Runge's Theorem.

        \begin{lem}\label{PROOF:NONBULG:BAKER:SUBLEMMA2}
            There exists a sequence of entire functions $(h_k)_{k\in\N}$, along with a sequence $(w_k)_{k\in\N}$ of non-zero complex numbers and a sequence $(\delta_k)_{k\in\N}$ of positive real numbers such that for all integers $k \geq 0$
            \begin{enumerate}%[label=(\roman*)]
                \item $0 < |{w_k}| < \min\{\frac{1}{k + 1}, \delta_g\}$, \label{PROOF:NONBULG:BAKER:SUBLEMMA:CONSTR1}
                \item $0 < \delta_k < 1$, \label{PROOF:NONBULG:BAKER:SUBLEMMA:CONSTR2}
                \item $\pi_z (F_{h_k}^k(x_0, w_k)) \in \D(x_k, \delta) \subset D_k$, \label{PROOF:NONBULG:BAKER:SUBLEMMA:CONSTR3}
                \item $\|{h_k + \frac{x_{k+1}}{g^k(w_k)}}\|_{\infty, D_{k}} = \sup_{z \in D_k}|{h_k(z) + \frac{x_{k+1}}{g^k(w_k)}}| < 1$, \label{PROOF:NONBULG:BAKER:SUBLEMMA:CONSTR4}
            \end{enumerate}
            and for all integers $k \ge1$
            \begin{enumerate}%[label=(\roman*)]
                \setcounter{enumi}{4}
                \item $\|{h_{k} - h_{k-1}}\|_{\infty, H_{k}} < \frac{1}{2^{k+1}} \min\{\delta_j\mid j = 0, \dots, k\}$, and \label{PROOF:NONBULG:BAKER:SUBLEMMA:CONSTR5}
                \item for any $\widetilde{h} : \C \to \C$ holomorphic, the inequality
                \begin{equation}
                    \|{\widetilde{h} - h_{k - 1}}\|_{\infty, H_k} < \delta_k
                \end{equation}
                implies
                \begin{equation}
                    |\pi_z (F_{h_{k-1}}^k(x_0, w_k)) - \pi_z( F_{\widetilde{h}}^k(x_0, w_k))| < \delta.
                \end{equation} \label{PROOF:NONBULG:BAKER:SUBLEMMA:CONSTR6} 
            \end{enumerate}
        \end{lem}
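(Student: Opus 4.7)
The plan is to construct the three sequences inductively on $k$, at each stage fixing first $w_k$, then $\delta_k$, and finally $h_k$ via Runge's theorem. The base case $k=0$ is almost automatic: pick any $w_0 \in \C \setminus \{0\}$ with $|w_0| < \min\{1, \delta_g\}$, take $h_0$ to be the constant function $-x_1/w_0$ (so (4) holds with zero discrepancy), and set $\delta_0 = 1/2$. Conditions (1)--(4) are then immediate, and (5)--(6) are vacuous since they are stipulated only for $k \geq 1$. As a preliminary step, it is convenient to shrink $\delta_g$ once and for all so that $g$ has $0$ as its unique zero in $\overline{\D(0, \delta_g)}$; this guarantees that $g^k(w_k) \neq 0$ throughout the construction, so that the quantity $-x_{k+1}/g^k(w_k)$ appearing in (4) is always well-defined.

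Assuming $(h_{k-1}, w_{k-1}, \delta_{k-1})$ have been constructed with all the listed properties, the map $w \mapsto \pi_z(F_{h_{k-1}}^k(x_0, w))$ is holomorphic near $0$ and takes the value $f^k(x_0) = x_k$ there. I would therefore first pick $w_k \in \C \setminus \{0\}$ satisfying (1) and moreover
\[
    |{\pi_z(F_{h_{k-1}}^k(x_0, w_k)) - x_k}| < \frac{\delta}{2}.
\]
Next, I would choose $\delta_k \in (0,1)$ small enough for the shadowing statement described below to hold with $\delta/2$ in place of $\delta$, which is strictly stronger than (6). Finally, I would apply Corollary~\ref{Lemma:Runge} to the pairwise disjoint simply connected compact sets $H_k$ and $D_k$ (disjoint because the right edge of $D_j$ sits at $x_0 + j + 4\delta$, so $D_j \subset H_k$ for $j < k$ while $D_k$ lies strictly to the right of the right edge $x_0 + k - 1/2$ of $H_k$), using the function $\psi$ equal to $h_{k-1}$ on a neighbourhood of $H_k$ and to the constant $-x_{k+1}/g^k(w_k)$ on a disjoint neighbourhood of $D_k$. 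Choosing the Runge tolerance
\[
    \varepsilon < \min\left\{1,\ \delta_k,\ \frac{1}{2^{k+1}} \min_{0 \leq j \leq k} \delta_j\right\}
\]
yields $h_k$ satisfying (4), (5), and $\|h_k - h_{k-1}\|_{\infty, H_k} < \delta_k$. Applying (6) with $\widetilde{h} = h_k$ and combining with the choice of $w_k$ via the triangle inequality produces (3).

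The main obstacle is showing that a suitable $\delta_k$ realising the shadowing bound in (6) exists. Writing $y_j := \pi_z(F_{h_{k-1}}^j(x_0, w_k))$ and $\tilde{y}_j := \pi_z(F_{\widetilde{h}}^j(x_0, w_k))$, the iteration unfolds as
\[
    \tilde{y}_{j+1} - y_{j+1} = \bigl(f(\tilde{y}_j) - f(y_j)\bigr) + g^j(w_k)\bigl(\widetilde{h}(\tilde{y}_j) - h_{k-1}(y_j)\bigr),
\]
whence $|\tilde{y}_{j+1} - y_{j+1}| \leq L_f |\tilde{y}_j - y_j| + |w_k|\bigl(\delta_k + L_h |\tilde{y}_j - y_j|\bigr)$, with $L_f, L_h$ the Lipschitz constants of $f$ and $h_{k-1}$ on a fixed compact neighbourhood of $H_k$. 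The inductive check that $\tilde{y}_j$ stays inside $H_k$ for $j = 0, \dots, k-1$ uses the containment $D_j \subset H_k$ with margin at least $1/2 - 4\delta \geq 1/10$ to the right edge of $H_k$: if the accumulated error stays below this margin, then $\tilde{y}_j$ remains in $H_k$, where $|\widetilde{h}(\tilde{y}_j) - h_{k-1}(\tilde{y}_j)| < \delta_k$. Iterating $k$ times produces a bound of the form $C(k, w_k, L_f, L_h)\, \delta_k$ on the total error, which can be made smaller than $\delta/2$ by taking $\delta_k$ small enough. All ingredients in $C(k, w_k, L_f, L_h)$ depend only on data already fixed before $\delta_k$, so the required $\delta_k$ indeed exists.
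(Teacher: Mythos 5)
Your proof follows essentially the same route as the paper: base case with a constant $h_0$, then an inductive step that first shrinks $w_k$ by continuity, then shrinks $\delta_k$ so that nearby error functions shadow, and finally applies Runge's theorem to $H_k\cup D_k$. Your remark about shrinking $\delta_g$ so that $g^k(w_k)\neq 0$ is a useful detail the paper leaves implicit, and your explicit Lipschitz/Gronwall-type estimate is a welcome expansion of the paper's ``by continuity''.

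There is, however, one gap in the choice of $w_k$. You only require that the $k$-th iterate satisfy $|\pi_z(F_{h_{k-1}}^k(x_0,w_k))-x_k|<\delta/2$. But your shadowing estimate needs the reference orbit $y_j=\pi_z(F_{h_{k-1}}^j(x_0,w_k))$ for $j=0,\dots,k-1$ to lie well inside $H_k$ (indeed in $D_j$, as you invoke ``$D_j\subset H_k$ with margin $\geq 1/10$''); otherwise the bound $\|\widetilde{h}-h_{k-1}\|_{\infty,H_k}<\delta_k$ cannot be applied at $\tilde y_j$, and the Lipschitz constants $L_f,L_h$ on a compact neighbourhood of $H_k$ are not available. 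Nothing in your stated choice of $w_k$ forces the intermediate iterates to stay near $x_j$; the paper fixes this by choosing $w_k$ so small that $\pi_z(F_{h_{k-1}}^j(x_0,w_k))\in \D(x_j,\delta)\subset D_j$ for \emph{every} $j\in\{0,\dots,k\}$. Your argument goes through once you strengthen the choice of $w_k$ in the same way — each finite collection of the maps $w\mapsto\pi_z(F_{h_{k-1}}^j(x_0,w))$ is continuous at $0$ with value $x_j$, so this is again just a matter of continuity.
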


        %%% Explanation for the 6 conditions:
        % (i) guarantees that the sequence of w_n goes towards zero and starts in the basin of attraction
        % (ii) is a technicality that simplifies slightly the errors in (v)
        % (iii) expresses the behavior of w_k under h_k, i.e., \delta-close to x_k
        % (iv) describes the behavior of h_k on D_k, i.e., basically a translation
        % (v) The summable error terms in this condition guarantee that the limit function (whose existence is proven later) is still similar enough to h_k. This similarity is then used in
        % (vi) to show that this implies sufficiently close behavior in the z-coordinate.

        \begin{proof}[Proof of Lemma~\ref{PROOF:NONBULG:BAKER:SUBLEMMA2}]
            Before turning to the construction itself, we observe that
            \begin{equation}\label{EQ:ZeroSpecialCase}
                \pi_z( F^{j}_{\widetilde{h}}(x_0, 0)) = \pi_z (f^{j}(x_0),0) = f^{j}(x_0) = x_{j}
            \end{equation}
            holds for all $j \in \N$ and $\tilde{h} : \C \to \C$ entire.

            For $k=0$, we verify the first four properties, and for $k = 1$, we verify all six properties in order to start the iterative construction.

            For the first step of the construction, let $w_0$ be any complex number in $\D(0,\delta_g)\setminus\{0\}$ and define 
            \begin{equation*}
                h_0 : \C \to \C, z \mapsto \frac{-x_1}{w_0}.
            \end{equation*} 
            Moreover, let $\delta_0 := \frac{1}{2}$. The exact choice of $\delta_0$ is immaterial for the rest of the proof. Only the first four properties have to be proven for $k = 0$, and they follow directly from our definition of $h_0$ and $w_0$. 

            Now, for $k = 1$, using \eqref{EQ:ZeroSpecialCase} and the continuity of both $w \mapsto \pi_z (F^0_{h_{0}}(x_0,w))$ and $w \mapsto \pi_z (F^1_{h_{0}}(x_0,w))$ at $0$, we can choose $w_1 \in \D({0,\min\{\delta_g, \frac{1}{2}\}}) \setminus \{0\}$, which yields property~\eqref{PROOF:NONBULG:BAKER:SUBLEMMA:CONSTR1}, such that
            \begin{equation*}
                \pi_z (F^0_{h_{0}}(x_0,w_1)) \in \D(x_0,\delta)
            \end{equation*}
            and
            \begin{equation*}
                \pi_z (F^1_{h_{0}}(x_0,w_1)) \in \D(x_1,\delta).
            \end{equation*}
            By continuity, we can choose $0 < \delta_1 < 1$ small enough such that for any holomorphic function $\widetilde{h} : \C \to \C$ the inequality
            \begin{equation*}
                \|{\widetilde{h} - h_{0}}\|_{\infty, H_{1}} < \delta_{1}
            \end{equation*}
            implies
            \begin{equation*}
                |{\pi_z( F^{1}_{h_{0}}(x_0, w_{1})) - \pi_z (F^{1}_{\widetilde{h}}(x_0, w_{1})})| < \delta
            \end{equation*}
            and
            \begin{equation}\label{EQ:SecondDeltaCrit}
                \pi_z( F^{1}_{\widetilde{h}}(x_0, w_{1})) \in \D(x_{1}, \delta).
            \end{equation}
            This yields properties \eqref{PROOF:NONBULG:BAKER:SUBLEMMA:CONSTR2} and \eqref{PROOF:NONBULG:BAKER:SUBLEMMA:CONSTR6}.
            Taking $\varepsilon_1 := \frac{1}{4} \min\{\delta_0, \delta_1\} < 1$ and defining
            \begin{equation*}
                \phi_{1} : H_{1} \cup D_{1} \to \C, z \mapsto \begin{cases}
                    h_{0}(z),\;\text{ for}\; z\in H_{1},\\
                    -\frac{x_{2}}{g^{1}(w_{1})},\;\text{ for}\;z \in D_{1},
                \end{cases}
            \end{equation*}
            we can apply Runge's Theorem Lemma \ref{Lemma:Runge} to get a polynomial $h_1$ satisfying 
            \begin{equation*}
                \|{h_1 - \phi_1}\|_{\infty, H_1 \cup D_1} < \varepsilon_1.
            \end{equation*}
            By construction, we obtain properties \eqref{PROOF:NONBULG:BAKER:SUBLEMMA:CONSTR4} and \eqref{PROOF:NONBULG:BAKER:SUBLEMMA:CONSTR5}. Property~\eqref{PROOF:NONBULG:BAKER:SUBLEMMA:CONSTR3} follows from \eqref{EQ:SecondDeltaCrit}. This concludes the first step of our inductive construction.

            Now, given $h_0, \dots, h_{k-1}$, $w_0, \dots, w_{k-1}$ and $\delta_0, \dots, \delta_{k-1}$ constructed according to properties \eqref{PROOF:NONBULG:BAKER:SUBLEMMA:CONSTR1} to \eqref{PROOF:NONBULG:BAKER:SUBLEMMA:CONSTR6}, we define the next elements of our sequences. 

            As before, \eqref{EQ:ZeroSpecialCase} and the continuity of $w \mapsto \pi_z( F^j_{h_{k-1}}(x_0,w) )$ in $0$ for $j = 0, \dots, k$ allow us to choose $w_{k} \in \D(0,\delta_g) \setminus \{0\}$ small enough such that $|{w_{k}}| < \frac{1}{k+1}$, which is required for property~\eqref{PROOF:NONBULG:BAKER:SUBLEMMA:CONSTR1}, and 
            \begin{equation}\label{PROOF:NONBULG:BAKER:SUBLEMMA:CONSTR:ST1}
                \pi_z( F^{j}_{h_{k-1}}(x_0, w_{k}) )\in \D(x_{j}, \delta) \subset D_j
            \end{equation}
            for all $0 \leq j \leq k$.

            From \eqref{PROOF:NONBULG:BAKER:SUBLEMMA:CONSTR:ST1} we can, in particular, derive that the $z$-coordinates of the first $k-1$ images of $(x_0, w_{k})$ under $F_{h_{k-1}}$ remain in $ D_0 \cup \dots \cup D_{k-1} \subset H_{k}$. Consequently, by continuity, if an entire function $\widetilde{h}$ is close enough to $h_{k-1}$ on $H_k$, then the first $k$ preimages of $(x_0, w_{k})$ under $F_{\widetilde{h}}$ will stay arbitrarily close to those under $F_{h_{k-1}}$.

            Thus, we find $ 0 < \delta_{k} < 1 $ small enough such that for any holomorphic functions $\widetilde{h} : \C \to \C$ the inequality
            \begin{equation}\label{PROOF:NONBULG:BAKER:SUBLEMMA2:EQ12}
                \|{\widetilde{h} - h_{k-1}}\|_{\infty, H_{k}} < \delta_{k}
            \end{equation}
            implies
            \begin{equation}\label{PROOF:NONBULG:BAKER:SUBLEMMA2:EQ13}
                |{\pi_z( F^{k}_{h_{k-1}}(x_0, w_{k})) - \pi_z (F^{k}_{\widetilde{h}}(x_0, w_{k})})| < \delta
            \end{equation}
            and
            \begin{equation}\label{PROOF:NONBULG:BAKER:SUBLEMMA2:EQ14}
                \pi_z( F^{k}_{\widetilde{h}}(x_0, w_{k})) \in \D(x_{j}, \delta).
            \end{equation}
            This yields properties \eqref{PROOF:NONBULG:BAKER:SUBLEMMA:CONSTR2} and \eqref{PROOF:NONBULG:BAKER:SUBLEMMA:CONSTR6}.

            Let $\varepsilon_{k} := \frac{1}{2^{k+1}}\min\{\delta_j\mid 0 \leq j \leq k\}$.
            Consider the holomorphic function
            \begin{equation}\label{PROOF:NONBULG:BAKER:DEFPHI}
                \phi_{k} : H_{k} \cup D_{k} \to \C, z \mapsto \begin{cases}
                    h_{k-1}(z),\;\text{ for}\; z\in H_{k},\\
                    -\frac{x_{k+1}}{g^{k}(w_{k})},\;\text{ for}\;z \in D_{k}.
                \end{cases}
            \end{equation}
            Runge's Theorem \ref{Lemma:Runge} applied to the two sets $H_k$ and $D_k$ yields a polynomial $h_{k}$ which coincides with $\phi_{k}$ on its domain of definition $H_{k} \cup D_{k}$ up to the error $\varepsilon_{k}$. Hence, properties~\eqref{PROOF:NONBULG:BAKER:SUBLEMMA:CONSTR4} and \eqref{PROOF:NONBULG:BAKER:SUBLEMMA:CONSTR5} hold for $k$. 
            
            Moreover, property~\eqref{PROOF:NONBULG:BAKER:SUBLEMMA:CONSTR3} holds for $k$, since $h_k$ satisfies \eqref{PROOF:NONBULG:BAKER:SUBLEMMA2:EQ12} by construction, which in turn implies \eqref{PROOF:NONBULG:BAKER:SUBLEMMA2:EQ14}.  

            This completes the construction step.
        \end{proof}

\noindent\textit{Proof of Lemma~\ref{PROOF:NONBULG:BAKER:SUBLEMMA} (continued).}

        Given the construction from Lemma \ref{PROOF:NONBULG:BAKER:SUBLEMMA2}, we now show the existence of a limit function $h$ which satisfies the property required by Lemma \ref{PROOF:NONBULG:BAKER:SUBLEMMA}.

        First observe, that property~\eqref{PROOF:NONBULG:BAKER:SUBLEMMA:CONSTR1} of Lemma~\ref{PROOF:NONBULG:BAKER:SUBLEMMA2} directly implies $w_k \to 0$ as $k \to \infty$. 
        By writing $h_k$ as a telescopic sum, that is,
        \begin{equation*}
            h_k = \sum_{j = m}^{k - 1} (h_{j + 1} - h_{j}) + h_m = h_m +  \sum_{j = m}^{k-1} \left(h_{j + 1} - h_{j}\right)
        \end{equation*}
        for $k \geq m$, and observing that properties~\eqref{PROOF:NONBULG:BAKER:SUBLEMMA:CONSTR2} and \eqref{PROOF:NONBULG:BAKER:SUBLEMMA:CONSTR5} in Lemma~\ref{PROOF:NONBULG:BAKER:SUBLEMMA2} yield
        \begin{equation*}
            \sum_{j = m}^{\infty} \|{h_{j + 1} - h_{j}}\|_{\infty, H_m} < \sum_{j = m}^{\infty} \|{h_{j + 1} - h_{j}}\|_{\infty, H_{j+1}} < \sum_{j = m}^{\infty} \frac{1}{2^{j+2}} < \infty,
        \end{equation*}
        we see that the telescopic sum forms a Cauchy sequence on $H_m$ and therefore has a uniform limit function 
        \begin{equation*}
            \lim_{k \to \infty} h_k = h_m + \sum_{j = m}^{\infty} (h_{j + 1} - h_{j})
        \end{equation*}
        on $H_m$. 
        Using the fact that $\bigcup_{k\in\N}H_k = \C$, $H_k \subset H_{k+1}$, and Weierstra\ss~Theorem, we obtain an entire limit function $h = \lim_{k\to \infty}h_k$ on $\C$. 

        Moreover, using the same argument as before, this limit function satisfies
        \begin{equation}\label{PROOF:NONBULG:BAKER:EQ2}
            \begin{split}
                \|{h - h_n}\|_{\infty, H_{n+1}} &\leq \ \sum_{k = n}^{\infty} \ \|{h_{k + 1} - h_{k}}\|_{\infty, H_{n+1}} \\
                &\leq \sum_{k = n+1}^{\infty} \frac{1}{2^{k+1}} \min\{\delta_j\mid j = 0, \dots, k\} \\
                &\leq \sum_{k = n+1}^{\infty} \frac{1}{2^{k+1}} \delta_{n+1} < \delta_{n+1}.
            \end{split}
        \end{equation}

        We now verify that our construction of $h$ ensures that the map
        \begin{equation*}
            F_h : \C^2 \to \C^2, (z,w) \mapsto  (f(z) + wh(z), g(w))
        \end{equation*}
        satisfies the relation \eqref{PROOF:NONBULG:BAKER:SUBLEMMA:EQ1}. 
       Let $n\in\N$. We first show that $h$ is sufficiently close to the recursively defined function $h_n$ to satisfy $ \pi_z( F_h^{n}(x_0, w_n) )\in D_n$. By using \eqref{PROOF:NONBULG:BAKER:EQ2} for $n-1$, we can apply property~\eqref{PROOF:NONBULG:BAKER:SUBLEMMA:CONSTR6} of Lemma~\ref{PROOF:NONBULG:BAKER:SUBLEMMA2} to $h$ and obtain
        \begin{equation*}
            |{\pi_z( F_{h_{n-1}}^n(x_0, w_n)) - \pi_z (F_{h}^n(x_0, w_n)})| < \delta.
        \end{equation*}
        Similarly, we can apply property~\eqref{PROOF:NONBULG:BAKER:SUBLEMMA:CONSTR6} of Lemma~\ref{PROOF:NONBULG:BAKER:SUBLEMMA2} to $h_n$ and obtain
        \begin{equation*}
            |{\pi_z( F_{h_{n-1}}^n(x_0, w_n)) - \pi_z (F_{h_n}^n(x_0, w_n)})| < \delta.
        \end{equation*}
        Furthermore, property~\eqref{PROOF:NONBULG:BAKER:SUBLEMMA:CONSTR3} of Lemma~\ref{PROOF:NONBULG:BAKER:SUBLEMMA2} yields
        \begin{equation*}
            \pi_z( F_{h_n}^n(x_0, w_n)) \in \D(x_n, \delta),
        \end{equation*}
        that is
        \begin{equation*}
            |{x_n - \pi_z( F_{h_n}^n(x_0, w_n)})| < \delta.
        \end{equation*}
        And lastly, by \eqref{LEMMA:OneDim:EQ1} from Lemma \ref{LEMMA:OneDim}, we have
        \begin{equation*}
            |{x_n - (x_0 + n)}| < \delta.
        \end{equation*}
        Together, we can conclude
        \begin{equation}\label{PROOF:NONBULG:BAKER:EQ1}
            \pi_z( F_h^{n}(x_0, w_n) )\in \overline{\D(x_0 + n, 4\delta)} = D_n.
        \end{equation}

        Now, we can write
        \begin{equation}
            \label{PROOF:NONBULG:BAKER:CONC_EQ1}
            \begin{split}
                |{\pi_z (F_h^{n+1}(x_0, w_n)})| &= |{f(\pi_z (F_h^{n}(x_0, w_n))) + g^n(w_n)h(\pi_z (F_h^{n}(x_0, w_n)))}| \\
                &\leq |f(\pi_z (F_h^{n}(x_0, w_n))) - x_{n+1}| \\
                & \qquad + |{g^n(w_n)h(\pi_z (F_h^{n}(x_0, w_n))) + x_{n+1}}| \\
                &\leq |{f(\pi_z (F_h^{n}(x_0, w_n)))- x_{n+1}}|\\
                & \qquad + |{g^n(w_n)}|\left|{h(\pi_z (F_h^{n}(x_0, w_n))) + \frac{x_{n+1}}{g^n(w_n)}}\right|\\
                &\leq |{f(\pi_z( F_h^{n}(x_0, w_n))) - x_{n+1}}| + \left\|{h +  \frac{x_{n+1}}{g^n(w_n)}}\right\|_{\infty, D_{n}}
            \end{split}
        \end{equation}
        where we used \eqref{PROOF:NONBULG:BAKER:EQ1} in the last step.

        For the first term in \eqref{PROOF:NONBULG:BAKER:CONC_EQ1}, we use \eqref{LEMMA:OneDim:EQ2} and \eqref{PROOF:NONBULG:BAKER:EQ1} to obtain
        \begin{equation}
            \label{PROOF:NONBULG:BAKER:CONC_EQ2}
            \begin{split}
                |{f(\pi_z (F_h^{n}(x_0, w_n))) - x_{n+1}}| &= |{f(\pi_z (F_h^{n}(x_0, w_n))) - f(x_n)}|\\
                &< 10 \delta \leq 1.
            \end{split}
        \end{equation}
        Applying property~\eqref{PROOF:NONBULG:BAKER:SUBLEMMA:CONSTR4} of Lemma~\ref{PROOF:NONBULG:BAKER:SUBLEMMA2} and inequality \eqref{PROOF:NONBULG:BAKER:EQ2} to the second term in \eqref{PROOF:NONBULG:BAKER:CONC_EQ1}, we arrive at
        \begin{equation}
            \label{PROOF:NONBULG:BAKER:CONC_EQ3}
            \begin{split}
                \left\|{h +  \frac{x_{n+1}}{g^n(w_n)}}\right\|_{\infty, D_{n}} &\leq \|{h - h_{n}}\|_{\infty, D_{n}} + \left\|{h_n + \frac{x_{n+1}}{g^n(w_n)}}\right\|_{\infty, D_{n}} \\
                &\leq \|{h - h_{n}}\|_{\infty, H_{n+1}} + 1
                \leq \delta_{n+1} + 1 < 2.
            \end{split}
        \end{equation}
        Combining \eqref{PROOF:NONBULG:BAKER:CONC_EQ1}, \eqref{PROOF:NONBULG:BAKER:CONC_EQ2}, and \eqref{PROOF:NONBULG:BAKER:CONC_EQ3}, we obtain
        \begin{equation*}
            |{\pi_z F_h^{n+1}(x_0, w_n)}| \leq 3,
        \end{equation*} 
        which is \eqref{PROOF:NONBULG:BAKER:SUBLEMMA:EQ1}. This concludes the proof.
    \end{proof}

\begin{rem} The previous proof can be easily adapted to the case of Fatou's functions of the form $f(z)=  z + a + e^{-z}$ with $a\in(0,+\infty)$. 
\end{rem}

\bibliographystyle{amsalpha}
\bibliography{bulging}
\end{document}